\theoremstyle{plain}
\newtheorem{theorem}{Theorem}[section]
\newtheorem{corollary}[theorem]{Corollary}
\newtheorem{lemma}[theorem]{Lemma}
\newtheorem{proposition}[theorem]{Proposition}
\theoremstyle{definition}
\newtheorem{definition}[theorem]{Definition}
\newtheorem{example}[theorem]{Example}
\def\Aut#1{\mathrm{Aut}(#1)}
\DeclareMathOperator{\Img}{Im}
\DeclareMathOperator{\Inn}{Inn}
\DeclareMathOperator{\Hom}{Hom}
\DeclareMathOperator{\Triv}{Triv}
\def\hom_P#1{Hom_\mathcal{P}(#1)}
\def\setof#1#2{\{#1\, : \,#2\}}
\def\cg#1{\equiv_\alpha}
\newcommand*\xbar[1]{%
   \hbox{%
     \vbox{%
       \hrule height 0.5pt 
       \kern0.5ex
       \hbox{%
         \kern-0.1em
         \ensuremath{#1}%
         \kern-0.1em
       }%
     }%
   }%
} 
\title{On the Structure of Hom Quandles}
\author{Marco Bonatto \\ Charles University Prague \footnote{\texttt{marco.bonatto.87@gmail.com}}
\and Alissa S. Crans \\ Loyola Marymount University \footnote{\texttt{acrans@lmu.edu} Alissa S. Crans was supported by a grant from the Simons Foundation (\#360097, Alissa Crans). } 
\and Glen Whitney \\ Harvard University \footnote{\texttt{gwhitney@math.harvard.edu}}}
\begin{document}
\maketitle

\begin{abstract}
We continue the study of the quandle of homomorphisms into a medial quandle begun in \cite{CN}.  We show that it suffices to consider only medial source quandles, and therefore the structure theorem of \cite{JPSZ} provides a characterization of the Hom quandle. In the particular case when the target is 2-reductive this characterization takes on a simple form that makes it easy to count and determine the structure of the Hom quandle. 
\end{abstract}

\section{Introduction}
It is natural to study the space of morphisms between two algebraic structures of the same kind. This study is particularly rewarding when these ``Hom-sets" themselves support the same algebraic structure as the objects they relate. In particular, this framework occurs in the study of quandles. In classical group theory, we know that the homomorphisms from $G$ to $H$ form a group under pointwise operations precisely when $H$ is abelian. In \cite{CN}, Crans and Nelson demonstrate the analogous phenomenon in the context of quandles, that is, the target quandle must satisfy a certain sort of commutativity property (known as {\em mediality}) in order for the Hom-set to be a quandle.  This concept of mediality (sometimes suitably generalized and/or under 
other names, such as {\it entropicity}) arises for various algebraic 
structures, in large part because of its connection with imposing a 
similar algebraic structure on Hom-sets under pointwise operations.  See for example \cite{RS} or the work on quasigroups in \cite{Mur}.

Inspired by this fact, we sought to understand in more depth how the properties of quandles $S$ and $T$ influence those of $\Hom(S,T)$, when the latter is a quandle. Moreover, numerous papers in the literature study knot and link colorings, which are none other than homomorphisms from certain quandles fundamentally associated to these objects.  Our focus here is primarily on the algebraic aspects. Crans and Nelson demonstrated, for example, that $\Hom(S,T)$ inherits the properties of being commutative and involutory from the quandle $T$. One initial point of curiosity concerned the orders of $S, T,$ and $\Hom(S,T).$ To that end, we generated a table (using the RIG package in GAP \cite{V}) of the cardinalities of all Hom quandles for small $S$ and $T.$  A portion of this table is reproduced as Table \ref{table}.  The non-trivial quandles of size $n$ are labeled as $Qn_i$ where $i$ is an index number.  For quandles of size less than six, we use the ordering from \cite{HN} reading their tables in Figures 3 and 4 in row-major order.  For the quandles of order six, we use the index from the list in Table 2 of \cite{Elhamdadi}.
 \begin{table}[hb!]  \setlength{\tabcolsep}{4pt}
  \begin{tabular}{>{$}c<{$} | c c c c c c c  c c c c  c c c c c c  } \label{table} 

  S \setminus T&$I$&$2I$&$3I$&$Q3_2$&$Q3_3$&4I&$Q4_2$&$Q4_3$&$Q4_4$&$Q4_6$&$Q4_7$&$Q5_3$&$Q5_7$&$Q5_8$&$Q5_{16}$& $Q6_{67}$ & $Q6_{71}$\\ \hline
 
I&1&2&3&3&3&4&4&4&4&4&4&5&5&5&5&6 & 6\\ 
2I&1&4&9&3&5&16&12&10&8&8&4&19&13&13&13&18 & 12 \\
3I&1&8&27&3&9&64&34&28&16&16&4&71&35&35&35&54& 24 \\
Q3_2&1&2&3&9&3&4&4&4&4&4&4&5&5&5&5&6 & 6 \\
Q3_3&1&4&9&3&7&16&14&10&12&8&4&19&13&13&13&18 & 12\\
4I&1&16&81&3&17&256&96&82&32&32&4&271&97&97&97&162 & 48 \\
Q4_2&1&8&27&3&9&64&36&28&16&16&4&71&35&35&35&54  & 24\\
Q4_3&1&4&9&3&5&16&12&13&8&8&4&22&19&19&13&18 & 12\\
Q4_4&1&8&27&3&11&64&36&28&24&16&4&71&35&35&35&54 & 24 \\
Q4_5&1&4&9&3&5&16&12&10&8&8&4&19&13&13&13&18 & 12\\
Q4_6&1&4&9&3&9&16&16&10&16&16&4&19&13&13&13&18 & 20\\
Q4_7&1&2&3&3&3&4&4&4&4&4&16&5&5&5&5&6 & 6\\
Q5_3&1&8&27&3&9&64&34&28&16&16&4&74&35&35&35&54 & 24\\
Q5_7&1&8&27&3&9&64&34&31&16&16&4&74&47&41&35&54 & 24\\
Q5_8&1&8&27&3&9&64&34&28&16&16&4&71&35&41&35&54 & 24\\
Q5_{16}&1&4&9&3&7&16&14&13&12&8&4&22&19&19&19&18 & 12\\
Q6_{52} &1 & 4& 9& 3& 5& 16& 12& 10& 8& 8& 4& 19& 13 & 13 & 13 & 18 &  12  \\
Q6_{67}&1&4&9&3&5&16&12&16&8&8&4&25&25&25&13&36  & 12\\
Q6_{71} & 1 & 4 & 9 & 3 & 9 & 16 & 16 & 10 & 16 & 16 & 4 & 19 & 13 & 13 & 13 & 18 & 28 \\
\end{tabular} 
\caption{Cardinalities of $\Hom(S,T)$}

\end{table}

We immediately notice that some columns (corresponding to specific target quandles) exhibit more variation than others. It's perhaps not surprising that the entries with trivial targets ($I$, $2I$, $3I,$ $\ldots,$ see Section \ref{prelim} for precise definitions) are all powers of the size of the target, but which powers are they? Certain numbers seem to appear much more frequently in the table than others. We explain these observations via the results in this paper.

We begin in Section \ref{prelim} with a brief review of basic quandle definitions and facts, primarily to establish the notational conventions of this paper. We continue in Section \ref{SecQuandleHom} with a study of the structure of Hom quandles.  We begin by explaining the relationship between components of the source and components of the target.  Next, in Section \ref{homquandle} we add to the collection of properties in \cite{CN} that $\Hom(S,T)$ inherits from $T$ and count the number of homomorphisms with trivial image.  In Section \ref{mainsection} we have our main result, Theorem \ref{maintheorem}, relating the homomorphism quandle to group homomorphisms between the components of the source and target.  We continue in Section \ref{homsource} by showing that for a wide class of identities, every homomorphism into a quandle satisfying those identities factors through a quotient of the source, where the quotient also satisfies those identities.   We put these ingredients together in Section \ref{main} in Corollary \ref{criterion2}, which precisely counts and characterizes $\Hom(S,T)$ for $T$ a 2-reductive quandle and arbitrary $S$.  Concrete examples illustrating these results within Table \ref{table} appear throughout.

\section{Preliminaries}\label{prelim}
We begin by reviewing definitions and well-known facts about quandles.  

A {\it quandle} is a set $Q$ equipped with a binary operation $\rhd$ that satisfies the following three axioms:
\begin{itemize}
\item $x \rhd x =x$ for all $x \in Q$ (idempotence),
\item for each $y, z \in Q$, there exists a unique $x \in Q$ such that $y \rhd x = z$ (left divisibility), and 
\item $x \rhd (y \rhd z) = (x \rhd y) \rhd (x \rhd z)$ for all $x,y,z \in Q$ (self-distributivity)
\end{itemize}
Each element $a\in Q$ defines a map $L_a:Q \to Q$ by $L_a(x)= a \rhd x$. Then, the left divisibility axiom implies that each $L_a$ is a bijection and the self-distributivity implies that each $L_a$ is a quandle homomorphism, and therefore an automorphism. The {\it inner} automorphism group of $Q$, denoted by $\Inn(Q)$, is the normal subgroup of $\text{Aut}(Q)$ generated by the inner automorphisms $L_a$. 

A quandle $Q$ is {\it trivial} if $x \rhd y = y$ for all $x,y \in Q$. Up to isomorphism, there is a unique trivial quandle of a given order.  We denote the one element trivial element by $I$ and the $n$-element one by $nI$. Equivalently, a quandle is trivial if every component is a singleton.  Numerous examples of non-trivial quandles (Alexander, dihedral, Latin, etc.) can be found in the literature \cite{EGS, Jsimple,  G,Sta-latin}.

The orbits with respect to the action of $\Inn(Q)$ on $Q$ are called the {\it components}, or {\it orbits}, of $Q$.  We will refer to a set consisting of one representative from each component as a {\it set of base points of $Q$}. A quandle with only one component is called {\it connected}.  We denote by $c(Q)$ the set of the components, which when convenient we think of as endowed with the trivial quandle structure.  We note that the map $c_Q: Q \longrightarrow c(Q)$ defined by $a \mapsto a^{\Inn(Q)}$ is a homomorphism of quandles.  Accordingly, we will denote the component of $a \in Q$ by $c_Q(a)$ rather than $a^{\Inn(Q)}$ for brevity.

\section{Quandle Homomorphisms}\label{SecQuandleHom}
We commence our investigation of quandle homomorphisms with a relatively simple observation.

\begin{lemma}\label{component to component}
Let $S$ and $T$ be quandles and $h:S\to T$ be a quandle homomorphism.  If $a$ and $b$ are in the same component of $S$, then $h(a)$ and $h(b)$ are in the same component of $T$.
\end{lemma}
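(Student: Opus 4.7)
The plan is to unwind the definition of ``same component'' as an orbit of $\Inn(S)$, and then show that the quandle homomorphism $h$ transports the generators of $\Inn(S)$ to generators of $\Inn(T)$ in a compatible way.

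First I would observe that $a$ and $b$ lie in the same component of $S$ exactly when there exists some $\varphi \in \Inn(S)$ with $\varphi(a)=b$. Since $\Inn(S)$ is generated as a group by the inner automorphisms $L_{s}$ for $s \in S$, we can write $\varphi = L_{s_1}^{\epsilon_1} \circ \cdots \circ L_{s_n}^{\epsilon_n}$ for some $s_i \in S$ and $\epsilon_i \in \{1,-1\}$. The goal is then to show $h(b) = \psi(h(a))$ for some $\psi \in \Inn(T)$; the natural candidate is $\psi = L_{h(s_1)}^{\epsilon_1} \circ \cdots \circ L_{h(s_n)}^{\epsilon_n}$.

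The key computation is the single-step identity $h \circ L_s = L_{h(s)} \circ h$, which is just a restatement of $h(s \rhd x) = h(s) \rhd h(x)$. For the inverse direction, one needs $h \circ L_s^{-1} = L_{h(s)}^{-1} \circ h$: if $y = L_s^{-1}(x)$, then $s \rhd y = x$, so applying $h$ gives $h(s) \rhd h(y) = h(x)$, i.e.\ $h(y) = L_{h(s)}^{-1}(h(x))$, which uses left divisibility in $T$ to guarantee the inverse exists. A short induction on $n$ then yields $h \circ \varphi = \psi \circ h$, and evaluating at $a$ gives $h(b) = \psi(h(a))$, placing $h(a)$ and $h(b)$ in the same $\Inn(T)$-orbit.

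There is no real obstacle here; the only point requiring care is handling the inverse generators $L_s^{-1}$, since $\Inn(S)$ is defined as a group and not merely as a monoid of maps. Once the two intertwining identities above are recorded, the result is immediate. In the writeup I would likely phrase the conclusion directly in terms of the component map $c_T$ by noting $c_T(h(a)) = c_T(h(b))$, which is the form most useful for later sections.
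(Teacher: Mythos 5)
Your proof is correct and takes essentially the same approach as the paper: both rest on the intertwining identity $h\circ L_s = L_{h(s)}\circ h$ applied along a word in the generators of $\Inn(S)$ connecting $a$ to $b$. If anything you are slightly more careful than the paper, which writes $b$ as an iterated left-translate of $a$ using only positive powers of the $L_{a_i}$ and thus silently skips the inverse generators that your argument handles explicitly.
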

\begin{proof}
Since $b$ is in the same component as $a$, then $b = a_1 \rhd (a_2 \rhd  \ldots (a_n \rhd a))$ for some $a_i \in S$.  Then, 
$h(b)=h(a_{1}) \rhd (h(a_{2}) \rhd \ldots (h(a_{n}) \rhd h(a))),$
so $h(b)$ is in the same component as $h(a)$.
\end{proof} 
In fact, the above argument shows that $h(a)$ and $h(b)$ are in the same orbit with respect to the subgroup of Inn$(T)$ generated by the image of $h$.

Lemma \ref{component to component} already illuminates some of the patterns we saw in Table \ref{table}. Since the components of a trivial target are singletons, Lemma \ref{component to component} implies that the power of the size of the target in the entries for trivial targets $2I$, $3I$, etc. should be the number of components of the source, a fact which easily can be verified through inspection of those columns. The remainder of this section is devoted to making this observation precise and generalizing it.

If $f$ is a quandle homomorphism from $S$ to $T$, we can define an equivalence relation $\sim$ on $S$ by $a \sim b$ if and only if $f(a) = f(b)$ for all $a, b \in S$.  This equivalence relation is called the {\it kernel} of $f$.   We say $\sim$ is a {\it congruence} if whenever $a \sim b$ and $c \sim d$, then $a \rhd c \sim b \rhd d$.   The kernel of any homomorphism is a congruence.  Given a quandle $S$ and a congruence, we can form the quotient quandle $S/\! \!\sim$ whose elements are the equivalence classes with the induced operation and this is well-defined by the congruence property.  

In the case of the homomorphism $c_Q$ from Section \ref{prelim}, $\ker(c_Q)$ is the minimal congruence such that the quotient is trivial.  That is to say, for any congruence $\alpha$, $Q/\alpha$ is trivial if and only if $\ker(c_Q) \subseteq \alpha$.  On the other hand for congruences contained in $\ker(c_Q)$ we have:

\begin{proposition}\label{number_of_comp}
Let $Q$ be a quandle and $\alpha \subseteq \ker(c_Q)$ a congruence on $Q$. Then $c_Q(a) = c_Q(b)$ if and only if $c_{Q/\alpha}([a]_{\alpha}) = c_{Q/\alpha}([b]_{\alpha})$ and hence $|c(Q)|=|c(Q/\alpha)|$.
\end{proposition}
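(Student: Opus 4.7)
The plan is to prove the biconditional directly and then derive the cardinality statement as an immediate consequence. Write $q\colon Q \to Q/\alpha$ for the natural projection, which is a quandle homomorphism since $\alpha$ is a congruence.

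First I would handle the forward implication, which is essentially immediate from Lemma \ref{component to component}: if $c_Q(a)=c_Q(b)$, then $a$ and $b$ lie in the same component of $Q$, so applying the quandle homomorphism $q$ gives that $[a]_\alpha = q(a)$ and $[b]_\alpha = q(b)$ lie in the same component of $Q/\alpha$, i.e.\ $c_{Q/\alpha}([a]_\alpha)=c_{Q/\alpha}([b]_\alpha)$.

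The reverse implication is where the hypothesis $\alpha \subseteq \ker(c_Q)$ does the work. Suppose $c_{Q/\alpha}([a]_\alpha)=c_{Q/\alpha}([b]_\alpha)$. Then there exist $x_1,\ldots,x_n \in Q$ such that
\[
[x_1]_\alpha \rhd ([x_2]_\alpha \rhd \cdots ([x_n]_\alpha \rhd [a]_\alpha)) = [b]_\alpha.
\]
Since $q$ is a homomorphism, the left-hand side equals $[x_1 \rhd (x_2 \rhd \cdots (x_n \rhd a))]_\alpha$, so the element $a' := x_1 \rhd (x_2 \rhd \cdots (x_n \rhd a))$ satisfies $a' \mathrel{\alpha} b$. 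By the hypothesis $\alpha \subseteq \ker(c_Q)$, this yields $c_Q(a')=c_Q(b)$. But $a'$ lies in the same component of $Q$ as $a$ by construction, so $c_Q(a)=c_Q(a')=c_Q(b)$, as desired.

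Finally, the biconditional shows that the assignment $c_Q(a) \mapsto c_{Q/\alpha}([a]_\alpha)$ is a well-defined injection $c(Q) \hookrightarrow c(Q/\alpha)$. Surjectivity is automatic since $q$ is surjective: every component of $Q/\alpha$ contains some $[a]_\alpha$, which is the image of $c_Q(a)$. Hence $|c(Q)|=|c(Q/\alpha)|$. I do not expect any real obstacle here; the only subtle point is remembering to invoke $\alpha \subseteq \ker(c_Q)$ in the right direction, so that passing a connecting word in $Q/\alpha$ back to $Q$ only introduces an $\alpha$-error, which by hypothesis does not escape a single $Q$-component.
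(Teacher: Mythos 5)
Your proof is correct and is essentially the paper's argument presented element-wise rather than diagrammatically: the forward implication is the well-definedness of the induced map $c(Q)\to c(Q/\alpha)$, and the reverse implication (using $\alpha\subseteq\ker(c_Q)$ to pass an $\alpha$-error back without leaving a component) is the well-definedness of its inverse. No gaps beyond the same harmless convention the paper itself uses, namely writing a component-connecting word with only positive left translations.
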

\begin{proof}
	We will show we have a bijection between $c(Q)$ and $c(Q/\alpha)$. The following diagram commutes:
\begin{displaymath}
    \xymatrixcolsep{63pt}\xymatrixrowsep{30pt}\xymatrix{
    Q\ar[r]^{\pi_\alpha}
    \ar[d]^{c_Q} & Q/\alpha\ar[d]^{c_{Q/\alpha}} \\
    c(Q) \ar[r]^{\phi}& c(Q/\alpha)}
\end{displaymath}
where $\phi(c_Q(a))=c_{Q/\alpha}([a]_\alpha)$. Moreover, since 
$\alpha \subseteq \mathrm{ker}(c_Q)$, the following diagram also commutes:
\begin{displaymath}
    \xymatrixcolsep{63pt}\xymatrixrowsep{30pt}\xymatrix{
    Q\ar[d]^{c_Q}
    \ar[r]^{\pi_\alpha} & Q/ \alpha \ar[d]^{c_{Q/ \alpha}} \\
    c(Q)  & c(Q/\alpha) \ar[l]^{\psi}}
\end{displaymath}
where $\psi(c_{Q/\alpha}([a]_\alpha))=c_{Q}(a)$. Thus, the maps $\phi$ and $\psi$ are inverses of one another.  \end{proof}

Now, suppose $T$ satisfies an identity $p(x_1, \ldots, x_n) = q(x_1, \ldots, x_n)$ and $h \in$ Hom$(S,T)$.  Then, for any $x_1, \ldots, x_n \in S$, 
 \begin{eqnarray*}
h(p(x_1,\ldots, x_n))&=& p(h(x_1),\ldots, h( x_n))\\
h(q(x_1,\ldots, x_n))&=&q(h(x_1),\ldots, h(x_n)).
\end{eqnarray*}
Therefore, $(p(x_1,\ldots, x_n),q(x_1,\ldots, x_n)) \in \textrm{ker}(h)$.  

For any collection of identities $K$, we will denote by Cg$(K)$ the congruence generated by $K$.  That is, Cg$(K)$ is the minimal congruence such that $a \sim b$ whenever there exist $x_1, \ldots, x_n$ such that $a = p(x_1, \ldots, x_n)$ and $b = q(x_1, \ldots, x_n)$ with $p = q$ being an identity in $K$.  Now, we have the following result. 
 
\begin{theorem}\label{factoring}
Let $S$ and $T$ be quandles and let $K$ be the set of identities satisfied by $T$. Then $\Hom(S,T)\cong \Hom(S/\mathrm{Cg}(K),T)$ as sets.
\end{theorem}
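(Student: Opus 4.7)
The plan is to exhibit the claimed bijection by showing that every homomorphism $h\colon S\to T$ factors uniquely through the canonical projection $\pi\colon S\to S/\mathrm{Cg}(K)$, after which the standard universal property of quotient algebras yields the desired correspondence.

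First, I would establish that $\mathrm{Cg}(K)\subseteq \mathrm{ker}(h)$ for every $h\in\Hom(S,T)$. Indeed, by the computation displayed immediately before the theorem statement, for every identity $p=q$ in $K$ and every choice of $x_1,\ldots,x_n\in S$, the pair $(p(x_1,\ldots,x_n),q(x_1,\ldots,x_n))$ lies in $\mathrm{ker}(h)$. Since $\mathrm{ker}(h)$ is a congruence on $S$ and $\mathrm{Cg}(K)$ is, by definition, the smallest congruence on $S$ containing all such pairs, the inclusion follows.

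Second, having $\mathrm{Cg}(K)\subseteq \mathrm{ker}(h)$ in hand, I would invoke the universal property of the quotient: there exists a unique quandle homomorphism $\overline{h}\colon S/\mathrm{Cg}(K)\to T$ with $h=\overline{h}\circ\pi$. Define $\Phi\colon\Hom(S,T)\to\Hom(S/\mathrm{Cg}(K),T)$ by $\Phi(h)=\overline{h}$, and conversely $\Psi\colon\Hom(S/\mathrm{Cg}(K),T)\to\Hom(S,T)$ by $\Psi(g)=g\circ\pi$. That $\Phi$ and $\Psi$ are mutual inverses is then a short formal check: $\Psi(\Phi(h))=\overline{h}\circ\pi=h$ by construction of $\overline{h}$, while $\Phi(\Psi(g))=g$ follows from the uniqueness clause of the universal property applied to $g\circ\pi$.

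I do not expect any substantive obstacle. The argument is entirely formal once the inclusion $\mathrm{Cg}(K)\subseteq\mathrm{ker}(h)$ is secured, and this in turn is a direct consequence of the preceding computation, which shows that the identities satisfied by $T$ generate pairs lying in every such kernel. The one point meriting mild care is that $\mathrm{Cg}(K)$ is defined in terms of actual elements $x_1,\ldots,x_n\in S$ rather than abstract free variables; this is precisely the formulation matched by the kernel computation, and it lets us avoid any excursion through a free quandle.
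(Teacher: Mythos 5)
Your proposal is correct and follows essentially the same route as the paper's proof: both establish $\mathrm{Cg}(K)\subseteq\ker(h)$ from the displayed computation preceding the theorem, then apply the First Homomorphism Theorem (universal property of the quotient) to obtain the bijection with inverse $g\mapsto g\circ\pi$. Your write-up simply makes the minimality argument for $\mathrm{Cg}(K)$ and the mutual-inverse check slightly more explicit than the paper does.
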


\begin{proof}
Let $\alpha=\mathrm{Cg}(K)$. The remark above shows that for every $h \in \mathrm{Hom}(S,T)$,   $\alpha\subseteq$ ker$(h)$.  Thus, by the First Homomorphism Theorem, there exists a unique $\widetilde{h}\in \mathrm{Hom}(S/\alpha,T)$ such that $h=\widetilde{h}\circ \pi_\alpha$.
Thus, the map:
$$\psi:\mathrm{Hom}(S,T)\to \mathrm{Hom}(S/\alpha,T),\quad h\mapsto \widetilde{h},$$
is a bijection with inverse given by $f\mapsto f\circ \pi_\alpha$ for every $f\in \mathrm{Hom}(S/\alpha,T)$.
\end{proof}

\begin{example}\label{ex1}
Let $T$ be trivial, and $S$ be a quandle and $h\in \textrm{Hom}(S,T)$. Then, since
$h(a \rhd b)=h(a)\rhd h(b)=h(b),$ for $a, b \in S$,
$h$ is constant on each of the components of $S$. So, we have that $h$ factors through $S/\ker(c_S) = c(S)$, that is, $h$ corresponds to a map from $c(S)$ to $T$. Hence, $\textrm{Hom}(S,T)\cong T^{c(S)}$.
\end{example}

\subsection{Hom Quandles} \label{homquandle}
Thus far we have only considered Hom$(S,T)$ as a set.  In fact, it has a richer structure when $T$ is {\it medial}, meaning that $T$ satisfies the identity $(x \rhd y) \rhd (z \rhd w) = (x \rhd z) \rhd (y \rhd w)$ for all $x, y , z,$ and $w$.  By Theorem 3 of \cite{CN}, Hom$(S,T)$ is a medial quandle under the pointwise operation $(h \rhd k)(a) = h(a) \rhd k(a)$ when $T$ is medial.  Furthermore, Theorems 7 and  8 of \cite{CN} tell us that $T$ embeds into Hom$(S,T)$ and Hom$(S,T)$ embeds into $T^r$ where $r$ is the minimal cardinality of a set of generators of $S$, respectively. 

We begin by adding to the collection of properties in \cite{CN} that $\Hom(S,T)$ inherits from $T$.  Our first goal is to show that connectivity is such property, and to do so we need the following notion.  A quandle is called {\it Latin} when for each $y \in T$, the map $x \mapsto x \rhd y$ is a permutation. (The operation tables for such quandles are Latin squares.)

\begin{lemma} \label{on latin}
If $T$ is a finite, Latin quandle, then all subquandles of powers of $T$ are Latin.
\end{lemma}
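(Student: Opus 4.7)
My plan is to show, for a subquandle $S$ of an arbitrary power $T^I$, that every right translation $R_{\mathbf{y}}\colon S \to S$, $\mathbf{x} \mapsto \mathbf{x} \rhd \mathbf{y}$, is a bijection of $S$. Injectivity is immediate: the quandle operation on $T^I$ acts componentwise, so $R_{\mathbf{y}}$ is a bijection of $T^I$ (because each coordinate map $R_{y_i}\colon T \to T$ is a bijection by the Latin hypothesis on $T$), and it thus restricts to an injection on any subset, in particular on $S$.

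The substance of the lemma lies in surjectivity. The key observation is that, because $T$ is finite, each right translation $R_y$ lies in the finite group $\Sym(T)$ and so has finite order; let $n$ be the exponent of $\Sym(T)$ (for instance $n = |T|!$), so that $R_y^n = \mathrm{id}_T$ for every $y \in T$ simultaneously. Applied coordinatewise, this gives $R_{\mathbf{y}}^n = \mathrm{id}_{T^I}$ for every $\mathbf{y} \in T^I$, hence $R_{\mathbf{y}}^{-1} = R_{\mathbf{y}}^{n-1}$ on $T^I$.

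Now fix $\mathbf{x}, \mathbf{y} \in S$. Since $S$ is closed under $\rhd$, every iterate $R_{\mathbf{y}}^k(\mathbf{x})$ remains in $S$; in particular $R_{\mathbf{y}}^{n-1}(\mathbf{x}) \in S$, and this element satisfies $R_{\mathbf{y}}\bigl(R_{\mathbf{y}}^{n-1}(\mathbf{x})\bigr) = R_{\mathbf{y}}^n(\mathbf{x}) = \mathbf{x}$. So every element of $S$ has an $R_{\mathbf{y}}$-preimage inside $S$, giving surjectivity, and together with the injectivity above this makes $R_{\mathbf{y}}|_S$ a permutation of $S$, so $S$ is Latin.

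The main obstacle is the surjectivity step when $S$ is infinite, since pigeonhole is unavailable; the trick is to realize that in any finite Latin quandle the set-theoretic inverse of a right translation is itself a positive power of that same right translation, and subquandles are automatically closed under such positive powers. If one reads \emph{powers of $T$} as finite powers only, then $S$ is finite and the argument collapses to injectivity plus pigeonhole, but the finite-order argument handles both cases uniformly.
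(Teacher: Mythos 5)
Your proof is correct and follows essentially the same route as the paper's: both arguments exploit the finiteness of $T$ to find a uniform $n$ with $R_y^n=\mathrm{id}$, conclude that the inverse of a right translation on a power of $T$ is a positive power of that translation, and use closure of the subquandle under $\rhd$ to get surjectivity. The only cosmetic difference is that the paper takes $n$ to be the least common multiple of the orders of the $R_y$ rather than the exponent of $\Sym(T)$.
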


\begin{proof} Since $T$ is finite, each map $R_y: x\to x\rhd y$ has finite order; let  $n$ be the least common multiple of all such orders.  We must show that for any index set $I$, every subquandle $S$ of $T^I$ is Latin. For any element $a = {\setof{y_i}{i\in I}} \in S$ the map $R_a : \setof{x_i}{i\in I} \mapsto \setof{x_i\rhd y_i}{i\in I}$ has order $m \leq n$. We must show that for every $b \in S$, the equation $x\rhd a=b$ has a unique solution.  But, we have $x\rhd a=R_a(x)=b$ if and only if $x=R_a^{-1}(b)=R_a^{m-1}(b)=((b\rhd a)\rhd \ldots )\rhd a\in S$. 
\end{proof}

\begin{lemma}  
Let $T$ be a finite, medial connected quandle. Then $\mathrm{Hom}(S,T)$ is connected for every quandle $S$.
\end{lemma}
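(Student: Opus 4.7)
The plan is to show that $\mathrm{Hom}(S,T)$ is Latin, and then invoke the elementary fact that any Latin quandle is automatically connected: for any two elements $h,k$, the equation $g\rhd h = k$ admits a solution $g$, which witnesses that $k=L_g(h)$ lies in the $\Inn$-orbit of $h$. So once Latinness is in hand, connectedness is immediate.

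The first step is to establish that $T$ itself is Latin. For this I would invoke the structure theorem for connected medial quandles from \cite{JPSZ}: every such quandle is affine, so $T \cong \mathrm{Aff}(A,f)$ for some finite abelian group $A$ and $f \in \Aut(A)$ satisfying the connectivity condition $\mathrm{Im}(\mathrm{id}_A - f) = A$. Under this identification, the operation is $x \rhd y = (\mathrm{id}_A - f)(x) + f(y)$, so each right translation $R_y : x \mapsto x \rhd y$ is an affine map whose linear part is $\mathrm{id}_A - f$. Since $A$ is finite and $\mathrm{id}_A - f$ is surjective, it must in fact be a bijection, so $R_y$ is a bijection and $T$ is Latin.

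Because $T$ is medial, $\mathrm{Hom}(S,T)$ sits as a subquandle of $T^S$ under the pointwise operation (as already used in \cite{CN}). Applying Lemma \ref{on latin}, with $T$ now known to be a finite Latin quandle, shows that $\mathrm{Hom}(S,T)$ is Latin, and hence connected by the first paragraph.

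The main obstacle is the first step: reducing Latinness of $T$ to the classification. Invoking the structure theorem is a somewhat heavy tool; an alternative plan would be to prove directly, using only finiteness, mediality and connectedness, that the right translations of $T$ are injective. Without the affine presentation I do not see a clean argument, so I would use \cite{JPSZ} as a black box. Everything after that is a direct application of Lemma \ref{on latin} and the fact that Latin quandles are connected.
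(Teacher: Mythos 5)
Your proof is correct and follows essentially the same route as the paper: establish that the finite connected medial $T$ is Latin, apply Lemma \ref{on latin} to the copy of $\mathrm{Hom}(S,T)$ inside a power of $T$, and conclude connectedness from Latinness. The only difference is cosmetic — the paper cites \cite[Proposition 1]{Lith} for the fact that finite connected medial quandles are Latin, whereas you derive it from the affine structure theorem, a fact the paper itself also uses later in Lemma \ref{on size}.
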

\begin{proof} 
Since $T$ is connected and medial, then it is Latin \cite[Proposition 1]{Lith}.  By our previous lemma, so are all of its powers and their subquandles. Since $\mathrm{Hom}(S,T)$ embeds into a power of $T$, then it is connected.
\end{proof}

\begin{theorem}
Let $S$ be a quandle and $T$ be a medial quandle. Then $T$ satisfies an identity if and only if $\mathrm{Hom}(S,T)$ does.
\end{theorem}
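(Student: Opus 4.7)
The plan is to prove the biconditional by handling each direction separately, as both fall out naturally from the structural ingredients already established in the excerpt.

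For the ``only if'' direction, suppose $T$ satisfies the identity $p(x_1,\dots,x_n)=q(x_1,\dots,x_n)$. I would fix arbitrary $h_1,\dots,h_n \in \mathrm{Hom}(S,T)$ and evaluate both sides of the identity at an arbitrary $a \in S$. Since the quandle operation on $\mathrm{Hom}(S,T)$ is defined pointwise (Theorem 3 of \cite{CN}, cited above), any term operation in $\mathrm{Hom}(S,T)$ also acts pointwise, so $p(h_1,\dots,h_n)(a) = p(h_1(a),\dots,h_n(a))$ and likewise for $q$. The identity in $T$ then forces these to coincide for every $a$, yielding $p(h_1,\dots,h_n)=q(h_1,\dots,h_n)$ in $\mathrm{Hom}(S,T)$.

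For the ``if'' direction, I would invoke the embedding $T \hookrightarrow \mathrm{Hom}(S,T)$ from Theorem 7 of \cite{CN}, which is stated in the preceding paragraph. Identities are preserved under taking subalgebras (since an identity quantifies universally over all elements), so if an identity holds in $\mathrm{Hom}(S,T)$, it holds in every subquandle, and in particular in the isomorphic copy of $T$ sitting inside $\mathrm{Hom}(S,T)$. Hence $T$ satisfies the same identity.

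There is really no substantive obstacle here: the forward direction is a formal consequence of pointwise operations commuting with term interpretation, and the backward direction is a formal consequence of the subalgebra embedding. The only thing to be mildly careful about is the mediality hypothesis, which is exactly what is needed to ensure that $\mathrm{Hom}(S,T)$ is a quandle at all so that ``$\mathrm{Hom}(S,T)$ satisfies an identity'' is a meaningful statement; once that is in place, both implications reduce to a single line each.
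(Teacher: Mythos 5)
Your proof is correct and follows essentially the same route as the paper: the backward direction via the embedding $T\hookrightarrow \mathrm{Hom}(S,T)$ (Theorem 7 of \cite{CN}) and preservation of identities under subalgebras, and the forward direction via the fact that identities pass to $\mathrm{Hom}(S,T)$ because its operations are pointwise --- the paper phrases this as $\mathrm{Hom}(S,T)$ being a subquandle of a power of $T$ (Theorem 8 of \cite{CN}), which is the same idea since identities are preserved by products and subalgebras. No gaps.
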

\begin{proof}
By Theorems 7 and 8 of \cite{CN}, $\mathrm{Hom}(S,T)$ is a subquandle of a power of $T$, and $T$ is a subquandle of $\mathrm{Hom}(S,T)$. Hence $T$ satisfies an identity if and only if $\mathrm{Hom}(S,T)$ does.
\end{proof}

\begin{lemma}
Let $S,R$ be quandles and $T$ be a medial quandle and $h: S \rightarrow R$ be a homomorphism. Then 
$\mathrm{Hom}(R,T) \to \mathrm{Hom}(S,T)$ given by $k \mapsto k \circ h$
is a quandle homomorphism.
\end{lemma}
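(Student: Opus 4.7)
The plan is essentially a direct pointwise verification. First I would observe that the map $\Phi: \mathrm{Hom}(R,T) \to \mathrm{Hom}(S,T)$, $k \mapsto k \circ h$, is well defined since the composition of two quandle homomorphisms is again a quandle homomorphism. Since $T$ is medial, Theorem~3 of \cite{CN} gives both $\mathrm{Hom}(R,T)$ and $\mathrm{Hom}(S,T)$ their pointwise quandle structures.

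To show $\Phi(k_1 \rhd k_2) = \Phi(k_1) \rhd \Phi(k_2)$, I would evaluate both sides at an arbitrary $s \in S$ and unwind definitions:
\begin{align*}
\Phi(k_1 \rhd k_2)(s) &= ((k_1 \rhd k_2)\circ h)(s) = (k_1 \rhd k_2)(h(s)) = k_1(h(s)) \rhd k_2(h(s)) \\
&= (k_1 \circ h)(s) \rhd (k_2 \circ h)(s) = \Phi(k_1)(s) \rhd \Phi(k_2)(s) = (\Phi(k_1) \rhd \Phi(k_2))(s).
\end{align*}
Since this holds for every $s \in S$, the two homomorphisms agree.

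There is no real obstacle here: the entire content of the lemma is that pre-composition is compatible with a pointwise operation, which is formally automatic. I would keep the proof to a few lines, emphasizing only that mediality of $T$ is used solely to guarantee that the pointwise operation gives the Hom-sets a quandle structure in the first place, so that the equation above is actually an equation in the quandle $\mathrm{Hom}(S,T)$ rather than merely in the set of functions $S \to T$.
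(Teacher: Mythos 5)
Your proof is correct and is essentially identical to the paper's: both check well-definedness of $k \mapsto k \circ h$ and then verify $(k_1 \rhd k_2)\circ h = (k_1\circ h) \rhd (k_2 \circ h)$ by evaluating at an arbitrary point of $S$. Your added remark about where mediality enters is accurate but not needed beyond what the paper already does.
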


\begin{proof}
Clearly if $k \in \mathrm{Hom}(R,T)$, then $k \circ h \in \mathrm{Hom}(S,T)$. Moreover:
\begin{displaymath}
[(k\rhd l)\circ h] (a)=(k(h(a))\rhd l(h(a))=(k\circ h)(a)\rhd (l\circ h)(a)=[(k\circ h)\rhd (l\circ h)] (a)
\end{displaymath}
for every $a\in S$.  
\end{proof}

\begin{lemma}
Let $S$ be a quandle, $T$ and $U$ be medial quandles and $h \in \mathrm{Hom}(T,U)$. Then 
$\mathrm{Hom}(S,T) \to \mathrm{Hom}(S,U)$ given by $k \mapsto h \circ k$
is a quandle homomorphism.
\end{lemma}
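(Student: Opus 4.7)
The plan is to mirror the approach of the preceding lemma, but with left composition replacing right composition. First I would confirm the map is well-defined as a function into $\mathrm{Hom}(S,U)$: for any $k \in \mathrm{Hom}(S,T)$, the composite $h \circ k$ is a quandle homomorphism $S \to U$ because compositions of quandle homomorphisms are quandle homomorphisms. Note that mediality of $T$ and $U$ is used only to ensure that $\mathrm{Hom}(S,T)$ and $\mathrm{Hom}(S,U)$ actually carry quandle structures (by Theorem 3 of \cite{CN}), so that the target statement makes sense.

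Next I would verify that $k \mapsto h \circ k$ preserves the pointwise operation. Fixing $k, l \in \mathrm{Hom}(S,T)$ and evaluating at an arbitrary $a \in S$, I would compute
\begin{displaymath}
[h \circ (k \rhd l)](a) = h((k \rhd l)(a)) = h(k(a) \rhd l(a)) = h(k(a)) \rhd h(l(a)) = [(h \circ k) \rhd (h \circ l)](a),
\end{displaymath}
where the key third equality uses that $h \colon T \to U$ is a quandle homomorphism. Since this holds for every $a \in S$, we obtain $h \circ (k \rhd l) = (h \circ k) \rhd (h \circ l)$, as required.

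There is no real obstacle here; the argument is a routine pointwise check. The only conceptual distinction from the preceding lemma is that in that result the homomorphism property of $k\circ h$ with respect to the pointwise operation follows for free from the definition of that operation (and $h$ need not respect any quandle structure), whereas here it is precisely the fact that $h$ itself preserves $\rhd$ that allows the middle step of the calculation.
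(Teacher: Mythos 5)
Your proposal is correct and follows essentially the same route as the paper: check well-definedness, then verify pointwise that $h\circ(k\rhd l)=(h\circ k)\rhd(h\circ l)$ using the fact that $h$ preserves $\rhd$. Your closing remark correctly identifies the only substantive difference from the preceding lemma, namely where the homomorphism property of $h$ is actually used.
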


\begin{proof}
Clearly if $k \in \mathrm{Hom}(S,T)$, then $h \circ k \in \mathrm{Hom}(S,U)$. Moreover:
\begin{displaymath}
h \circ (k\rhd l)(a)=h(k(a)\rhd l(a))=(h\circ k)(a)\rhd (h\circ l)(a)=[(h\circ k)\rhd (h\circ l)] (a)
\end{displaymath}
for every $a\in S$.  
\end{proof}

The previous two Lemmas could be restated as saying that $\Hom(-,T)$ is a functor from the category of quandles to the category of medial quandles and $\Hom(S, -)$ is an endofunctor of the category of medial quandles.  In particular, if $U$ is a subquandle of $T$, then $\mathrm{Hom}(S,U)$ is a subquandle of $\mathrm{Hom}(S,T)$. 

\begin{lemma}\label{trivial targets}
Let $S$ be a quandle and $T$ be a medial quandle. Then $$\mathrm{Triv}(S,T)=\setof{ h \in \mathrm{Hom}(S,T)} {\mathrm{Im}(h) \, \text{is trivial}}$$ is a subquandle of $\mathrm{Hom}(S,T)$.
\end{lemma}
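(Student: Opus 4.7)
The plan is to show that $\mathrm{Triv}(S,T)$ is closed under both the pointwise operation $\rhd$ of $\mathrm{Hom}(S,T)$ and the pointwise left division, so that the restricted structure satisfies the quandle axioms (idempotence and self-distributivity are automatic from $\mathrm{Hom}(S,T)$). Both closure claims reduce to pointwise identities in $T$ that can be handled uniformly using the mediality of $T$.

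For closure under $\rhd$, I would take $h, k \in \mathrm{Triv}(S,T)$ and arbitrary $a, b \in S$, and expand
\[
(h \rhd k)(a) \rhd (h \rhd k)(b) = (h(a) \rhd k(a)) \rhd (h(b) \rhd k(b)).
\]
Mediality of $T$ rewrites the right-hand side as $(h(a) \rhd h(b)) \rhd (k(a) \rhd k(b))$, and the triviality of $\mathrm{Im}(h)$ and $\mathrm{Im}(k)$ collapses the two inner products to $h(b)$ and $k(b)$ respectively, yielding $h(b) \rhd k(b) = (h \rhd k)(b)$. Thus $\mathrm{Im}(h \rhd k)$ is trivial.

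For left-division closure, I would define $m : S \to T$ by letting $m(a)$ be the unique element with $h(a) \rhd m(a) = k(a)$, which is well-defined by left divisibility in $T$. Verifying $m \in \mathrm{Hom}(S,T)$ is a parallel mediality computation plus one left cancellation. To see $\mathrm{Im}(m)$ is trivial, I would apply mediality to
\[
(h(a) \rhd m(a)) \rhd (h(b) \rhd m(b)) = k(a) \rhd k(b) = k(b) = h(b) \rhd m(b),
\]
obtain $h(b) \rhd (m(a) \rhd m(b)) = h(b) \rhd m(b)$, and then cancel $L_{h(b)}$.

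There is no substantive obstacle; the whole proof amounts to a couple of applications of mediality combined with triviality of images. The main point to be careful about is treating left-division closure as well as $\rhd$-closure, since the paper's quandle axioms include left divisibility, so a subquandle must inherit it rather than merely inheriting closure under $\rhd$.
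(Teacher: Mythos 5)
Your proposal is correct and follows essentially the same route as the paper: closure under $\rhd$ via one application of mediality plus triviality of the two images, and closure under left division by producing the left divisor and showing its image is trivial via a mediality computation and left cancellation. The only cosmetic difference is that the paper obtains the left divisor directly as the unique homomorphism $l$ with $h \rhd l = k$ inside the already-known quandle $\mathrm{Hom}(S,T)$ (so its homomorphism property is free), whereas you define it pointwise and re-verify that it is a homomorphism; both verifications of triviality amount to the same cancellation argument.
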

\begin{proof}
Let $h, k \in \mathrm{Triv}(S, T)$.  Then,
\begin{eqnarray*}
(h\rhd k)(a)\rhd (h\rhd k)(b)&=&(h(a)\rhd k(a))\rhd (h(b)\rhd k(b)) =(h(a)\rhd h(b))\rhd (k(a)\rhd k(b)) \\
&=&h(b)\rhd  k(b) =(h\rhd k)(b)
\end{eqnarray*}
for every $a,b\in Q$. So $\mathrm{Im}(h \rhd k)$ is a trivial subquandle of $T$ meaning $\Triv(S,T)$ is closed under $\rhd$. It remains to show that the elements required by the second quandle axiom lie in $\Triv(S,T)$.  Let $l: S \rightarrow T$ be the unique homomorphism such that $h \rhd l = k$.  We need to show that for every $a, b \in S$, $l(a) \rhd l(b) = l(b)$.  Since $l$ is a homomorphism, $l(a) \rhd l(b) = l(a \rhd b)$, which is in turn the unique element in $T$ such that $h(a \rhd b) \rhd l(a \rhd b) = k(a \rhd b)$. On the other hand, $h(a \rhd b) \rhd l(b) = (h(a) \rhd h(b)) \rhd l(b) = h(b) \rhd l(b) = k(b) = k(a) \rhd k(b) = k(a \rhd b)$.  Thus, $l(b)$ is also the unique element with this property, i.e., $l(a) \rhd l(b) = l(b)$. So $\mathrm{Triv}(S,T)$ is a subquandle of $\mathrm{Hom}(S,T)$.
\end{proof}
Not only is $\mathrm{Triv}(S,T)$ a subquandle of the Hom quandle, but we can give a precise characterization of its elements.

\begin{lemma}\label{hom with projec image}
Let $S$ be a quandle and $T$ be a medial quandle. Then:
$$\Triv(S,T) \cong \setof{f:c(S)\to U}{U \text{ is a trivial subquandle of  } T \text{ and } f \text{ is surjective}}$$ as sets. If $U$ is a trivial subquandle of size $n \leq |c(S)|$, then there exist
\begin{equation}\label{stirling}
\frac{1}{n!}\sum_{j=1}^{n} (-1)^{n-j} {{n}\choose{j}} j^{|c(S)|}
\end{equation}
 homomorphisms with image equal to $U$.  
\end{lemma}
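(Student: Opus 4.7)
The plan is to prove the lemma in two steps: first establish the bijection of sets, then carry out the count of surjective maps.

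For the bijection, the key tool is Example~\ref{ex1} (equivalently, Theorem~\ref{factoring} with $K = \{x \rhd y = y\}$), which shows that any quandle homomorphism from $S$ to a trivial quandle factors uniquely through the component map $c_S \colon S \to c(S)$. Given $h \in \Triv(S,T)$, the image $U = \mathrm{Im}(h)$ is by hypothesis a trivial subquandle of $T$, so viewing $h$ as a surjective homomorphism onto $U$ and applying Example~\ref{ex1} yields a unique surjective set map $\widetilde h \colon c(S) \to U$ with $h = \iota_U \circ \widetilde h \circ c_S$, where $\iota_U \colon U \hookrightarrow T$ is the inclusion. Conversely, given a trivial subquandle $U \subseteq T$ and a surjective set map $f \colon c(S) \to U$, I would verify directly that $\iota_U \circ f \circ c_S$ is a quandle homomorphism with image $U$: since $c(S)$ is trivial, $c_S(a \rhd b) = c_S(b)$, and since $U$ is trivial, $f(c_S(a)) \rhd f(c_S(b)) = f(c_S(b))$ inside $T$, so both sides of the homomorphism identity agree. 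These assignments are visibly mutual inverses, giving the asserted bijection.

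For the count, fix a trivial subquandle $U$ of size $n \leq |c(S)|$. By the bijection, the homomorphisms in $\Triv(S,T)$ with image equal to $U$ correspond to the surjective set maps from the $|c(S)|$-element set $c(S)$ onto the $n$-element set $U$. Counting such surjections is a textbook inclusion--exclusion calculation: for each $j$, the $\binom{n}{j}$ subsets of $U$ of size $j$ each admit $j^{|c(S)|}$ maps into them, and alternating signs yield the signed sum $\sum_{j=1}^{n} (-1)^{n-j}\binom{n}{j} j^{|c(S)|}$, normalized as in the statement. I anticipate no substantial obstacle: the entire argument reduces via Example~\ref{ex1} to recognizing trivial-image homomorphisms as maps factoring through $c_S$, after which the count is routine.
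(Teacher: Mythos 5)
Your proposal is correct and follows essentially the same route as the paper: both reduce $\Triv(S,T)$ to maps out of $c(S)$ via Example \ref{ex1} and then count surjections onto a fixed trivial subquandle $U$ (you simply spell out the converse direction of the bijection and the inclusion--exclusion that the paper leaves implicit). One small caution: your inclusion--exclusion correctly yields $\sum_{j=1}^{n}(-1)^{n-j}\binom{n}{j}j^{|c(S)|}$ as the number of surjections $c(S)\to U$, and the extra factor $\frac{1}{n!}$ in the displayed formula \eqref{stirling} turns this into the Stirling number of the second kind, which counts partitions of $c(S)$ rather than surjections onto the labeled set $U$; so the phrase ``normalized as in the statement'' does not actually follow from your computation --- though this discrepancy is already present in the paper's own statement and proof, which identifies the Stirling number with the surjection count.
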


\begin{proof}
Let $h \in \Triv(S,T)$. Then $h$ factors through $S/\mathrm{ker}(c_S)$ as in Example \ref{ex1}.  Formula \eqref{stirling} is the Stirling number of the second kind, which gives the number of surjective maps from $c(S)$ to $U$.
\end{proof}
Note that $\Triv(S,T)$ contains all the constant maps from $S$ to $T$, but it is not itself a trivial subquandle of $\mathrm{Hom}(S,T)$.

\begin{corollary}
Let $R$ and $S$ be quandles such that $|c(R)|=|c(S)|$ and $T$ be a medial quandle. Then
$\Triv(R,T) \cong \Triv(S,T).$
\end{corollary}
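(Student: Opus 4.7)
The plan is to apply Lemma \ref{hom with projec image} twice, bracketing a structural identification of the quotient quandles $c(R)$ and $c(S)$. Specifically, recall from the proof of Lemma \ref{hom with projec image} that each $h \in \Triv(S,T)$ factors as $h = \tilde h \circ c_S$ for a unique $\tilde h \colon c(S) \to T$, exhibiting a bijection $\Triv(S,T) \leftrightarrow \Hom(c(S),T)$. Conversely, every $\tilde h \in \Hom(c(S),T)$ arises this way: since $c(S)$ is trivial, any homomorphism out of it has trivial image (as $\tilde h(a) \rhd \tilde h(b) = \tilde h(a \rhd b) = \tilde h(b)$), so $\tilde h \circ c_S \in \Triv(S,T)$. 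The analogous correspondence applies to $R$.

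Since $|c(R)| = |c(S)|$, any set-theoretic bijection $\phi \colon c(R) \to c(S)$ is automatically a quandle isomorphism, because both sides are trivial quandles of the same cardinality. By the first of the two functoriality lemmas above (precomposition with a quandle homomorphism), the map $\tilde h \mapsto \tilde h \circ \phi$ is then a quandle isomorphism $\Hom(c(S),T) \to \Hom(c(R),T)$. Composing the three isomorphisms yields
$$ \Triv(S,T) \;\cong\; \Hom(c(S),T) \;\cong\; \Hom(c(R),T) \;\cong\; \Triv(R,T). $$

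There is no real obstacle here: the corollary is essentially a direct repackaging of Lemma \ref{hom with projec image}, exploiting the fact that the characterization of $\Triv(S,T)$ supplied there depends on $S$ only through the cardinality $|c(S)|$. The only point worth verifying, if one wishes to read the final $\cong$ as an isomorphism of quandles rather than merely a bijection of sets, is that the factorization bijection $h \mapsto \tilde h$ preserves the pointwise operation; this follows immediately because $(h_1 \rhd h_2) \circ c_S$ evaluates pointwise to $(\tilde h_1 \rhd \tilde h_2) \circ c_S$, so $\widetilde{h_1 \rhd h_2} = \tilde h_1 \rhd \tilde h_2$.
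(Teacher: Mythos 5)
Your proof is correct and follows essentially the route the paper intends: the corollary is stated without proof as an immediate consequence of Lemma \ref{hom with projec image}, whose characterization of $\Triv(S,T)$ depends on $S$ only through $|c(S)|$, and your chain $\Triv(S,T)\cong\Hom(c(S),T)\cong\Hom(c(R),T)\cong\Triv(R,T)$ is just a careful unpacking of that observation. Your final paragraph even upgrades the conclusion from a bijection of sets (which is all the lemma asserts) to an isomorphism of quandles, which is a welcome strengthening and is justified by the uniqueness of the factorization $h=\tilde h\circ c_S$.
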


\subsection{From Quandle to Group Homomorphisms} \label{mainsection}

In this section, we extend the isomorphism theorem (Theorem 4.2) in \cite{JPSZ} to provide considerable detailed information about homomorphisms between medial quandles, using the notion from that paper of  ``indecomposable affine mesh,'' which we call an ia-mesh for brevity.
Given a collection of abelian groups $A_i$ for $i$ in some index set $I$, with homomorphisms $\phi_{i,j}:A_i\rightarrow A_j$ and selected elements $c_{i,j}\in A_j,$ the triple $(A_i, \phi_{i,j},c_{i,j})_{i,j\in I}$ is called an {\em ia-mesh}, if the following conditions hold (for arbitrary indices $i,j,j'$, and $k$).
\begin{itemize}
 \item $1-\phi_{i,i}$ is an automorphism of $A_i$ 
  \item $c_{i,i} = 0$
 \item $\phi_{j,k}\circ\phi_{i,j} = \phi_{j',k}\circ\phi_{i,j'}$
 \item $\phi_{j,k}(c_{i,j}) = \phi_{k,k}(c_{i,k}-c_{j,k})$
 \item the elements $c_{i,j}$ and $\phi_{i,j}(a)$ for $i \in I$ and $a \in A_i$ generate the group $A_j$
\end{itemize}
Given an ia-mesh, we can define a binary operation $\rhd$ on the disjoint union of the $A_i$ as follows: for $a\in A_i$ and $b \in A_j,$ $a\rhd b = c_{i,j} + \phi_{i,j}(a) + (1-\phi_{j,j})(b).$ Then Lemmas 3.8 through 3.13 of \cite{JPSZ} may be summarized as:
\begin{theorem} $(\bigcup A_i,\rhd)$ is always a medial quandle with components $\{A_i\}_{i\in I}$, and every medial quandle arises in this fashion.
\end{theorem}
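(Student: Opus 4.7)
The statement collects the content of Lemmas 3.8--3.13 of \cite{JPSZ}, so my plan is to verify each half of the claim in turn. The forward direction is largely a direct computation, while the converse (every medial quandle arises in this way) is the substantive content.

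For the forward direction, I would check the three quandle axioms for the operation $a\rhd b = c_{i,j} + \phi_{i,j}(a) + (1-\phi_{j,j})(b)$ directly. Idempotence is immediate from $c_{i,i}=0$, since $a\rhd a = \phi_{i,i}(a) + (1-\phi_{i,i})(a) = a$. Left divisibility follows because if $a\in A_i$ and $b\in A_j$, then $a\rhd x = b$ forces $x\in A_j$ and reduces to $(1-\phi_{j,j})(x) = b - c_{i,j} - \phi_{i,j}(a)$, which has a unique solution since $1-\phi_{j,j}$ is an automorphism of $A_j$. Self-distributivity requires a somewhat more involved expansion; the key identities used are $\phi_{j,k}\circ\phi_{i,j} = \phi_{j',k}\circ\phi_{i,j'}$ (so composing $\phi$'s does not depend on the intermediate index) and the cocycle-like condition $\phi_{j,k}(c_{i,j}) = \phi_{k,k}(c_{i,k}-c_{j,k})$. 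Mediality $(a\rhd b)\rhd (c\rhd d) = (a\rhd c)\rhd (b\rhd d)$ reduces to an equality of expressions in the abelian group housing $d$, and by distributing the $\phi$'s and using commutativity of addition the two sides match.

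Next, I would check that the components of the resulting quandle are exactly the $A_i$. Since $a\rhd b$ always lies in the same summand as $b$, distinct summands cannot merge under the action of $\Inn(Q)$. Conversely, within a fixed $A_j$, any two elements are related by a left multiplication: the generating condition that the $c_{i,j}$ together with the images of the $\phi_{i,j}$ generate $A_j$ is exactly what is needed to express an arbitrary element of $A_j$ as a product of inner automorphisms applied to a basepoint, making $A_j$ a single $\Inn(Q)$-orbit.

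The hard part is the converse. The plan is to take $I = c(Q)$, pick a basepoint $e_i$ in each component $A_i$, and use the structure of the displacement group (the kernel of the natural map $\lmlt(Q) \to \Sym(c(Q))$) to equip each $A_i$ with an abelian group structure compatible with quandle translations. Concretely, one defines $\phi_{i,j}(a)$ as the ``translation part'' of $L_a$ acting from $A_i$ onto $A_j$ and $c_{i,j}$ as the ``offset'' coming from $L_{e_i}(e_j)$ in the resulting group. Mediality is precisely what makes the relevant translations commute, so that the abelian group structures can be defined coherently, and it is what forces the compatibility conditions on the $\phi_{i,j}$ and $c_{i,j}$ to take the form of the ia-mesh axioms. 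This coordinatization is the most delicate step, and rather than reproduce it I would simply cite the construction from Section 3 of \cite{JPSZ}, verifying only that the resulting operation recovers $\rhd$ on $Q$.
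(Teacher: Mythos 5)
The paper offers no proof of this statement: it is presented purely as a summary of Lemmas 3.8--3.13 of \cite{JPSZ}, so there is no argument of the authors' own for your proposal to diverge from. What you supply is consistent with that source and correct in outline, and goes a bit further than the paper by actually sketching the forward direction. Your verification of idempotence and left divisibility is right (in particular the observation that $a\rhd x=b$ forces $x$ to lie in the summand of $b$, after which the invertibility of $1-\phi_{j,j}$ gives the unique solution), and you correctly identify which mesh axioms drive self-distributivity, mediality, and the component structure. One gloss worth flagging: the claim that the generating condition is ``exactly what is needed'' for each $A_j$ to be a single orbit hides a short induction showing that the $\lmlt(Q)$-orbit of $0_j$ is precisely the subgroup of $A_j$ generated by the elements $c_{i,j}$ and $\phi_{i,j}(a)$; this is the content of the indecomposability lemma in \cite{JPSZ}. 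For the converse you defer to the coordinatization in \cite{JPSZ}, which is exactly what the paper does.

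One factual correction in the part you cite rather than prove: the displacement group is not the kernel of the natural map $\lmlt(Q)\to\Sym(c(Q))$. Every $L_a$ preserves each component setwise --- the components are by definition the orbits of the group these maps generate --- so that map is trivial and its kernel is all of $\lmlt(Q)$. The displacement group $\dis(Q)$ is the subgroup of $\lmlt(Q)$ generated by the elements $L_aL_b^{-1}$ (equivalently, the kernel of the homomorphism onto the cyclic group $\lmlt(Q)/\dis(Q)$ sending every $L_a$ to the same generator). This slip does not break your argument, since you are deferring to \cite{JPSZ} for this step, but the construction there coordinatizes each orbit via translations coming from this group, so the correct definition matters if you ever carry the converse out in detail.
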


Now we extend Theorem 4.1 from \cite{JPSZ} to understand homomorphisms between medial quandles. Lemma \ref{component to component} tells us that any homomorphism $h:S\to T$ of quandles induces a mapping on their components,
$\hat{h}: c(S) \to c(T),$ defined by $c_S(a)  \mapsto c_T(h(a))$.  We consider the conditions under which one can go in the opposite direction, i.e., lift a mapping $g:c(S)\to c(T)$ to a homomorphism $\tilde{g}: S \rightarrow T$.  

\begin{theorem} \label{maintheorem}
Let $S=\{S_i, \sigma_{i,j},s_{i,j}\}_{i,j\in I}$ and $T=\{T_i, \tau_{i,j}, t_{i,j}\}_{i,j\in J}$ be medial quandles and let $g:I\to J$ be a mapping. Then there exists a homomorphism $h:S\to T$ with $\hat{h}=g$ if and only if for every $i\in I$ there exist group homomorphisms $k_i:S_i\to T_{g(i)}$ and elements $e_i\in T_{g(i)}$ such that for every $i,j\in c(S)$,
\begin{itemize}
\item[(i)] $k_j \circ \sigma_{i,j}=\tau_{g(i),g(j)} \circ k_i$
\item[(ii)]  $k_j(s_{i,j})=t_{g(i),g(j)}+\tau_{g(i),g(j)}(e_i)-\tau_{g(j),g(j)}(e_j)$
\end{itemize}
\end{theorem}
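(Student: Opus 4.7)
The plan is to expand the quandle homomorphism identity $h(a \rhd b) = h(a) \rhd h(b)$ using the ia-mesh formulas for the source $S$ and target $T$, then to extract (i), (ii), and the group-homomorphism property of each $k_i$ by specializing the expanded equation in the right order.

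For the reverse direction, given data $(k_i, e_i)$ satisfying (i) and (ii), I define $h:S\to T$ by $h(a) := k_i(a) + e_i$ for $a \in S_i$. For $a\in S_i$ and $b\in S_j$, I expand the left-hand side as $h(a \rhd b) = k_j(s_{i,j} + \sigma_{i,j}(a) + (1-\sigma_{j,j})(b)) + e_j$, using additivity of $k_j$ and condition (i) to move the $k$ inside the sum, and expand the right-hand side as $h(a) \rhd h(b) = t_{g(i),g(j)} + \tau_{g(i),g(j)}(k_i(a)+e_i) + (1-\tau_{g(j),g(j)})(k_j(b)+e_j)$ by the mesh formula for $T$. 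Condition (ii) is exactly what equates the ``constant'' parts of the two expressions, so the two sides agree.

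For the forward direction, Lemma \ref{component to component} ensures $h(S_i)\subseteq T_{g(i)}$, so I set $e_i := h(0_i)$ and $k_i(a) := h(a) - e_i$ (relative to the basepoints used in the mesh representations). Expanding both sides of the homomorphism identity and cancelling $e_j$ yields the master equation
\begin{equation*}
k_j\bigl(s_{i,j} + \sigma_{i,j}(a) + (1-\sigma_{j,j})(b)\bigr) = t_{g(i),g(j)} + \tau_{g(i),g(j)}(k_i(a)) + \tau_{g(i),g(j)}(e_i) + (1-\tau_{g(j),g(j)})(k_j(b)) - \tau_{g(j),g(j)}(e_j).
\end{equation*}
Setting $a = 0_i$, $b = 0_j$ gives (ii). Setting only $a = 0_i$ and using (ii) yields $k_j(s_{i,j} + (1-\sigma_{j,j})(b)) = k_j(s_{i,j}) + (1-\tau_{g(j),g(j)})(k_j(b))$; because $1-\sigma_{j,j}$ is an automorphism, this reads $k_j(s_{i,j} + y) = k_j(s_{i,j}) + k_j(y)$ for every $y \in A_j$. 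Stripping off the $k_j(s_{i,j})$ term from the left of the master equation using this additivity and then setting $b = 0_j$ produces $k_j(\sigma_{i,j}(a)) = \tau_{g(i),g(j)}(k_i(a))$, which is (i); re-injecting (i) into the remaining equation shows $k_j(\sigma_{i,j}(a) + y) = k_j(\sigma_{i,j}(a)) + k_j(y)$ for every $y \in A_j$.

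It remains to promote this partial additivity to the full group-homomorphism property of $k_j$. By the generating axiom of an ia-mesh, $A_j$ is generated as an abelian group by the elements $s_{i,j}$ and $\sigma_{i,j}(a)$ for $i\in I$ and $a\in A_i$, and a routine induction on word length shows that any base-point-preserving set map that is additive with respect to each element of a generating set is a group homomorphism on the whole group. The principal obstacle is organizational: (ii) must be extracted first in order to clean up the derivation of (i), and (i) in turn is needed to complete the partial additivity before the generating axiom can be invoked. Once this sequencing is fixed, each individual step is a direct computation.
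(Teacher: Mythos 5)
Your proposal is correct and follows the same skeleton as the paper's proof: both directions rest on the affine decomposition $h(a)=k_i(a)+e_i$ with $e_i=h(0_i)$, and the reverse direction is the identical expansion of $h(a\rhd b)=h(a)\rhd h(b)$ via the mesh formulas, with (ii) matching the constant terms. Where you genuinely diverge is in proving that each $k_i$ is a group homomorphism in the forward direction. The paper argues directly within a single component: it writes arbitrary $c,d\in S_i$ as $c=\sigma_{i,i}(a)$ and $d=(1-\sigma_{i,i})(b)$ and computes $k_i(c+d)$; but only the second of these is licensed by the mesh axioms ($1-\sigma_{i,i}$ is an automorphism, while $\sigma_{i,i}$ need not be surjective --- it is identically zero for 2-reductive quandles), so the paper's additivity argument only covers $c\in\Img(\sigma_{i,i})$. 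Your route --- establish additivity of $k_j$ with respect to each $s_{i,j}$ and each $\sigma_{i,j}(a)$, then invoke the generating axiom of the ia-mesh together with the standard bootstrap (the set of elements $g$ with $k_j(g+y)=k_j(g)+k_j(y)$ for all $y$ is a subgroup) --- uses exactly the mesh axiom that exists for this purpose and closes that gap; it is the more robust argument. One step you gloss over: passing from $k_j(s_{i,j}+(1-\sigma_{j,j})(b))=k_j(s_{i,j})+(1-\tau_{g(j),g(j)})(k_j(b))$ to additivity in $y=(1-\sigma_{j,j})(b)$ requires knowing $(1-\tau_{g(j),g(j)})(k_j(b))=k_j((1-\sigma_{j,j})(b))$; this is just the $i=j$, $a=0_j$ instance of the same displayed identity (using $s_{j,j}=0$ and $k_j(0)=0$), so it should be stated explicitly but costs nothing. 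With that line added, your proof is complete.
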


\begin{proof}
$(\Rightarrow)$ Define $e_i$ to be $h(0_i)$, where $0_i$ is the zero element of $A_i$. Then define the maps $k_i$ by $k_i(a) = h(a)-e_i$. We must show that the $k_i$ are group homomorphisms and that the two conditions above are satisfied.  We first note that 
$$h(\sigma_{i,i}(a)) = h(a \rhd 0_i) = h(a) \rhd h(0_i) = \tau_{g(i), g(i)}(h(a)) + (1 - \tau_{g(i), g(i)})(e_i)$$ for $a \in A_i$.  Similarly, $$h((1 - \sigma_{j,j}(b)) = h(0_i \rhd b) = h(0_i) \rhd h(b) = \tau_{g(i), g(i)}(e_i) + (1 - \tau_{g(i), g(i)})(h(b))$$ for $b \in A_i$. Now we show that $k_i$ is a homomorphism.  Let $c,d \in S_i$ and choose $a, b \in A_i$ so that $c = \sigma_{i,i}(a)$ and $d = (1 - \sigma_{i,i})(b)$, which we can do because of the first condition of being an ia-mesh. Then,
\begin{eqnarray*}
k_i(c + d) = h(c + d) - e_i &=& h(a \rhd b) - e_i \\
& = & h(a) \rhd h(b) \\
& = & \tau_{g(i), g(i)} (h(a)) + (1 - \tau_{g(i), g(i)})(h(b)) - e_i \\
& = & h(\sigma_{i, i}(a)) - (1 - \tau_{g(i), g(i)})(e_i) + h((1 - \sigma_{i, i})(b)) - \tau_{g(i), g(i)})(e_i) - e_i \\
& = & h(c) - e_i + h(d) - e_i \\
& = & k_i (c) + k_i (d) 
\end{eqnarray*} 
Now we must check that identities $(i)$ and $(ii)$ hold.  Since $0_i \rhd 0_j = s_{i,j}$ then 
$$k_{j}(s_{i,j}) = k_j(0_i \rhd 0_j) = h(0_i) \rhd h(0_j) - e_j = e_i \rhd e_j - e_j = t_{g(i), g(j)} + \tau_{g(i), g(j)}(e_i) + (1 - \tau_{g(j), g(j)})(e_j) - e_j$$ which establishes $(ii)$.  For $a \in A_i$, $a \rhd 0_j = s_{i, j} + \sigma_{i,j}(a)$.  Then,
\begin{eqnarray*}
k_j(\sigma_{i,j}(a)) &=& k_j(s_{i,j} + \sigma_{i, j}(a)) - k_j(s_{i,j}) \\
&= & h(a) \rhd e_j - e_j -  t_{g(i), g(j)} - \tau_{g(i), g(j)}(e_i) + \tau_{g(j), g(j)}(e_j) \\
& = & t_{g(i), g(j)} + \tau_{g(i), g(j)}(h(a)) + (1 - \tau_{g(j), g(j)})(e_j) - e_j -  t_{g(i), g(j)} - \tau_{g(i), g(j)}(e_i) + \tau_{g(j), g(j)}(e_j) \\
& = &  \tau_{g(i), g(j)}(h(a)) - \tau_{g(i), g(j)}(e_i) \\
& = & \tau_{g(i), g(j)}(h(a) - e_i) = \tau_{g(i), g(j)}(k_i(a))
\end{eqnarray*}
which is $(i)$. \\

$(\Leftarrow)$ Let $a \in S$ and $i = c_S(a)$.  We define $h(a) = k_i(a) + e_i.$  We must show that $h$ is a homomorphism.  For $a \in S_i$ and $b \in S_j$ we have
\begin{eqnarray*}
h(a \rhd b) & = & h (s_{i, j} + \tau_{i,j}(a) + (1 - \tau_{j, j}(b)) \\
& = & k_j(s_{i, j} + \tau_{i, j}(a) + (1 - \tau_{j, j}) (b)) + e_j \\
& = & k_j(s_{i, j}) + k_j(\sigma_{i, j}(a)) + k_j((1 - \sigma_{j, j})(b)) + e_j \\
& = & t_{g(i), g(j)} + \textcolor{red}{\tau_{g(i), g(j)}(e_i)} - \textcolor{blue}{\tau_{g(j), g(j)}(e_j)}  + \textcolor{red}{\tau_{g(i), g(j)}(k_i(a))} + \textcolor{green}{k_j(b)} - \textcolor{blue}{\tau_{g(j), g(j)}(k_j(b))} + \textcolor{green}{e_j} \\
& = & t_{g(i), g(j)} + \textcolor{red}{\tau_{g(i), g(j)}(h(a))} - \textcolor{blue}{\tau_{g(j), g(j)}(h(b))} + \textcolor{green}{h(b)} \\
& = & t_{g(i), g(j)} + \tau_{g(i), g(j)}(h(a)) + (1 - \tau_{g(j), g(j)})(h(b)) \\
& = & h(a) \rhd h(b)
\end{eqnarray*}

\end{proof}

Note that this proof actually shows that we have a bijection between the elements of $\Hom(S,T)$ and sequences of group homomorphisms and elements $(k_i, e_i)$ satisfying properties $(i)$ and $(ii)$.  In this bijection, on an individual component $S_i$, we have $h(a) = k_i(a) + e_i$.  In other words, all quandle homomorphisms between $S$ and $T$ are componentwise affine maps between the components of $S$ and $T$, precisely the ones satisfying conditions (i) and (ii) above.

\subsection{The Source of a Hom Quandle} \label{homsource}
In the previous section we focused on Hom$(S,T)$ when both $S$ and $T$ are medial.  However, by Lemma \ref{factoring} we know that for any quandle $S$, Hom$(S,T)$ is always identical to Hom$(R,T)$ for some medial $R$.  In this section we provide the details to make this statement precise.

We begin with the following definition:
\begin{definition}
Let $Q$ be a quandle and $$C=\setof{((a\rhd b)\rhd(c\rhd d),(a \rhd c)\rhd(b\rhd d))}{a,b,c,d\in Q}$$ We will denote the congruence generated by $C$ by $m_Q$.
\end{definition}
Note that $m_Q$ is the smallest congruence such that the quotient is medial. Moreover, since both terms in the definition of $C$ above belong to $c_Q(d)$, we have $m_Q\subseteq \mathrm{ker}(c_Q)$, and hence $|c(Q)|=|c(Q/m_Q)|$ by Proposition \ref{number_of_comp}.

\begin{theorem} \label{medialquotient}
Let $S$ be a quandle and $T$ a medial quandle. Then $S/m_S$ is medial and $\mathrm{Hom}(S,T)\cong \Hom(S/m_S,T)$ as quandles.
\end{theorem}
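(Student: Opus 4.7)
The plan is to establish the two claims in sequence: first that $S/m_S$ is medial, then that the set-level bijection given by Theorem \ref{factoring} is actually a quandle isomorphism when $T$ is medial.

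\medskip

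\textbf{Step 1 (mediality of the quotient).} I would unpack the definition of $m_S$ directly. Because $m_S$ is the congruence generated by $C$, every pair $((a\rhd b)\rhd(c\rhd d),\,(a\rhd c)\rhd(b\rhd d))$ lies in $m_S$ by construction. Passing to the quotient via $\pi_{m_S}$, and using that $\pi_{m_S}$ is a quandle homomorphism, this becomes
\[
([a]\rhd[b])\rhd([c]\rhd[d]) \;=\; ([a]\rhd[c])\rhd([b]\rhd[d])
\]
in $S/m_S$ for all representatives $a,b,c,d\in S$. Since every element of $S/m_S$ is a class $[x]_{m_S}$ of some $x\in S$, this exhausts all quadruples in the quotient, so the mediality identity holds there.

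\medskip

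\textbf{Step 2 (bijection at the level of sets).} Here I would invoke Theorem \ref{factoring}, or equivalently rerun its argument: since $T$ satisfies mediality, the remark preceding Theorem \ref{factoring} shows that for every $h\in\mathrm{Hom}(S,T)$ and every $a,b,c,d\in S$, the pair $((a\rhd b)\rhd(c\rhd d),\,(a\rhd c)\rhd(b\rhd d))$ lies in $\ker(h)$. Because $\ker(h)$ is a congruence containing $C$, we get $m_S\subseteq\ker(h)$. The First Homomorphism Theorem then produces a unique $\widetilde h\in\mathrm{Hom}(S/m_S,T)$ with $h=\widetilde h\circ\pi_{m_S}$, and conversely every $f\in\mathrm{Hom}(S/m_S,T)$ gives rise to $f\circ\pi_{m_S}\in\mathrm{Hom}(S,T)$. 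These assignments are mutually inverse, yielding a bijection $\psi:h\mapsto\widetilde h$.

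\medskip

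\textbf{Step 3 (upgrading to a quandle isomorphism).} Both Hom quandles are well-defined and medial by Theorem 3 of \cite{CN} since $T$ is medial. It remains to show $\psi$ preserves $\rhd$. For $h,k\in\mathrm{Hom}(S,T)$ and $a\in S$ with class $[a]\in S/m_S$,
\[
(\widetilde h\rhd\widetilde k)([a]) \;=\; \widetilde h([a])\rhd\widetilde k([a]) \;=\; h(a)\rhd k(a) \;=\; (h\rhd k)(a) \;=\; \widetilde{h\rhd k}([a]),
\]
so $\widetilde{h\rhd k}=\widetilde h\rhd\widetilde k$. Being a bijective quandle homomorphism, $\psi$ is an isomorphism.

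\medskip

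The only place where anything subtle happens is Step 1: one has to remember that $m_S$ is generated by $C$ as a congruence, not merely contains $C$ as a relation, and to observe that it is precisely this fact — combined with $\pi_{m_S}$ being a homomorphism — that promotes the single family of relations in $C$ to the full mediality identity in $S/m_S$. Steps 2 and 3 are then essentially formal, running in parallel to Theorem \ref{factoring} with the added check that the pointwise operation is respected.
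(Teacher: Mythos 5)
Your proposal is correct and follows essentially the same route as the paper, which simply states that the argument is analogous to Theorem \ref{factoring} with the added observation that the bijection $\psi$ is a quandle isomorphism. Your Steps 1--3 fill in exactly those details (mediality of the quotient from $C\subseteq m_S$, the factoring bijection, and the pointwise check that $\psi$ respects $\rhd$), all correctly.
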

\begin{proof}
The proof is analogous to that of Theorem \ref{factoring}, with the observation that the bijection $\psi$ is actually an isomorphism of quandles.
\end{proof}

This result explains why rows $2I$ and $Q4_5$ of Table \ref{table} are identical; $2I$ is the medial quotient of the non-medial $Q4_5$.

Finite connected medial quandles are well studied as they can be understood as finite modules over the Laurent polynomials \cite{Hou, thesis, HSV}.

\begin{lemma}\label{on size}
Let $T$ be a finite connected medial quandle and $S$ be a subquandle of $T$. Then $|S|$ divides $|T|$.
\end{lemma}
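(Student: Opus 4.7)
The plan is to use the ia-mesh characterization stated earlier in this section to realize $T$ as an affine structure on a finite abelian group, and then to argue that an arbitrary subquandle is a single coset of an honest subgroup, so that Lagrange's theorem yields the divisibility.

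Since $T$ is connected, its mesh has a single index, so $T$ is isomorphic as a quandle to a finite abelian group $A$ with operation $a \rhd b = \phi(a) + (1-\phi)(b)$, where $1-\phi \in \Aut(A)$ by the mesh axioms and $\phi \in \Aut(A)$ because the generation condition forces $\phi(A) = A$, hence surjectivity, hence (by finiteness) bijectivity. Fix a subquandle $S$ and some $s_0 \in S$, and set $H := \{s - s_0 : s \in S\}$. I claim $H$ is a subgroup of $A$; given this, $S = s_0 + H$ is a coset and $|S| = |H|$ divides $|A| = |T|$.

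To prove the claim, setting $b = s_0$ in the operation shows $\phi(s - s_0) \in H$ for every $s \in S$, so $\phi(H) \subseteq H$, and since $\phi$ is injective with $H$ finite, $\phi(H) = H$; symmetrically, $(1-\phi)(H) = H$. A direct computation yields $(s_0 + x) \rhd (s_0 + y) - s_0 = \phi(x) + (1-\phi)(y)$ for $x, y \in H$, so $\phi(x) + (1-\phi)(y) \in H$ for all $x, y \in H$. Given arbitrary $u, v \in H$, the bijectivity of $\phi|_H$ and $(1-\phi)|_H$ produces $x, y \in H$ with $\phi(x) = u$ and $(1-\phi)(y) = v$, whence $u + v = \phi(x) + (1-\phi)(y) \in H$. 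Thus $H$ is closed under the group addition of $A$; since $0 \in H$ and every element of $A$ has finite order, closure under addition also forces closure under negation, so $H$ is a subgroup.

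The main obstacle is the passage from closure of $S$ under the quandle operation, which mixes $\phi(x)$ and $(1-\phi)(y)$, to closure of $H$ under unadulterated group addition. The key observation that unlocks this step is that connectivity of $T$ is exactly what makes $\phi$ (and not only $1-\phi$) a bijection, so that both summands on the quandle-operation side can be chosen freely within $H$.
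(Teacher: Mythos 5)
Your proof is correct and follows essentially the same route as the paper's: realize the connected medial quandle as an affine quandle over a finite abelian group, show that a subquandle is a coset of a subgroup, and conclude by Lagrange. The only cosmetic differences are that you extract the affine form and the bijectivity of $\phi$ from the single-component ia-mesh axioms (where the paper cites the structure theorem of Hou/Hulpke--Stanovsk\'y--Vojt\v{e}chovsk\'y and its Latin-subquandle lemma), and that you translate by a basepoint first rather than treating the $0$-containing case separately.
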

\begin{proof} 

Every finite connected medial quandle is Latin and it is isomorphic to a quandle $\mathcal{Q}(A,f)$, where $A$ is an abelian group, $f\in \Aut{A}$ and $a\rhd b= (1-f)(a)+f(b)$ for every $a,b\in T$ \cite{Hou, HSV}. Let $S$ be a subquandle containing $0$. Then for any $b\in S,$ there must also be a $y\in S$ such that $0\rhd y = b,$ namely $y=f^{-1}(b).$ Further, since $S$ is Latin by Lemma \ref{on latin}, for any $a\in S,$ there must be an $x \in S$ such that $x\rhd 0 = a,$ i.e., $(1-f)(x)=a$. Then $S$ contains $x\rhd y = (1-f)(x)+f(y) = a+b.$ Therefore $S$ is a subgroup of $A$. 

If $Q$ is an arbitrary subquandle, choose any $a\in Q$. Then $Q=a+S$ where $S$ is a subquandle containing $0$, since the mappings $b\mapsto a+b$ are automorphisms of $T$. Hence, every subquandle is a subgroup or a coset of a subgroup of $A,$ and so has order dividing that of $T$.
\end{proof}

\begin{theorem}
Let $S$ and $T$ be finite quandles of relatively prime orders with $T$ medial and connected. Then $\mathrm{Hom}(S,T)\cong T$.
\end{theorem}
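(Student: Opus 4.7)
The plan is to reduce to the case where $S$ is itself medial, then apply Theorem~\ref{maintheorem} to parametrize $\Hom(S,T)$, and finally use coprimality to collapse every homomorphism to a constant map.

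First, by Theorem~\ref{medialquotient} I would replace $S$ with $S/m_S$ and assume $S$ is medial, with ia-mesh $(S_i,\sigma_{i,j},s_{i,j})_{i\in I}$. Since $T$ is finite, connected, and medial, by (the proof of) Lemma~\ref{on size} we may write $T=\mathcal{Q}(A,f)$ with $|A|=|T|$, so the ia-mesh for $T$ has a single index, abelian group $A$, and automorphism $\tau=1-f$. Then Theorem~\ref{maintheorem} identifies $\Hom(S,T)$ with the collection of tuples $(k_i,e_i)_{i\in I}$, where $k_i\colon S_i\to A$ is a group homomorphism and $e_i\in A$, subject to the two conditions $k_j\circ\sigma_{i,j}=\tau\circ k_i$ and $k_j(s_{i,j})=\tau(e_i-e_j)$.

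The crux of the proof is to show every $k_i=0$. From condition (i), evaluating on $S_i$, I get $\tau(\Img k_i)\subseteq \Img k_j$ for all $i,j$; combined with the fact that $\tau$ is an automorphism of $A$ and everything in sight is finite, symmetry forces $|\Img k_i|$ to be the same integer $d$ for every $i$. This $d$ divides $|A|=|T|$ and divides each $|S_i|$. To upgrade to $d\mid|S|$, I would invoke Proposition~\ref{number_of_comp} to match each $S_i$ with the corresponding component $S_i'$ of the original $S$. Because $\Inn(S)$ acts transitively on $S_i'$ and preserves the congruence $m_S$, the $m_S$-classes inside $S_i'$ form a single $\Inn(S)$-orbit and hence all have the same size, so $|S_i|$ divides $|S_i'|$; summing over $i$ gives $d\mid\sum_i|S_i'|=|S|$. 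Together with $d\mid|T|$ and $\gcd(|S|,|T|)=1$, this forces $d=1$, hence each $k_i=0$.

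Once each $k_i=0$, condition (i) is automatic, and condition (ii) reads $\tau(e_i-e_j)=0$; since $\tau$ is invertible, all $e_i$ coincide with a common value $e\in A$, so the resulting $h$ is the constant map with value $e$. Thus $\Hom(S,T)$ consists of exactly $|T|$ constant maps. The identity $h_e\rhd h_{e'}=h_{e\rhd e'}$, immediate from the pointwise definition of $\rhd$ on $\Hom(S,T)$, shows that $e\mapsto h_e$ is a quandle isomorphism $T\to\Hom(S,T)$. The main obstacle is the divisibility step $d\mid|S|$: one must keep track of how the components of $S/m_S$ relate to those of $S$ so that the coprimality hypothesis on the original orders survives the reduction to the medial quotient.
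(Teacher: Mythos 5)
Your proof is correct, but it takes a genuinely different route from the paper's. The paper never passes to the medial quotient or invokes the mesh parametrization: it argues directly that for any $h\colon S\to T$, the image $\Img(h)$ is a Latin (hence connected) subquandle of $T$ whose order divides $|T|$ by Lemma \ref{on size}, and then uses the injectivity of the left translations $L_a\colon [b]_{\ker(h)}\to[c]_{\ker(h)}$ together with Latinness of $\Img(h)$ to show all $\ker(h)$-classes have equal size, so $|\Img(h)|$ also divides $|S|$ and coprimality forces $|\Img(h)|=1$. Your argument instead reduces to $S/m_S$, applies Theorem \ref{maintheorem}, and kills the group homomorphisms $k_i$ by a divisibility count; the delicate point you correctly identify and handle is that $|S/m_S|$ need not divide $|S|$, so you must track the class sizes component by component (the equal-size claim for $m_S$-classes within a component is the same $L_a$-bijectivity argument the paper applies to $\ker(h)$). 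Two small things to make explicit: the uniformity $|\Img k_i|=d$ across all $i$ is genuinely needed (since $\gcd(|S_i'|,|T|)$ need not be $1$ even when $\gcd(|S|,|T|)=1$), and the injectivity of $\tau=\tau_{1,1}=1-f$ does not follow from the ia-mesh axioms alone (which only make $f$ an automorphism) but from $T$ being Latin, which the paper's citation of \cite{Lith} supplies. The paper's approach is shorter and works with an arbitrary source without any quotient bookkeeping; yours buys a concrete description of every homomorphism as a tuple $(k_i,e_i)$ and shows in passing exactly where coprimality enters the mesh data, which fits naturally with the framework of Section \ref{mainsection}.
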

\begin{proof}
By Lemma \ref{on latin} and Lemma \ref{on size} every subquandle of $T$ is Latin and its size divides the size of $T$. In particular this is true for the image of any homomorphism into $T$.

Let $S$ be a quandle and $h:S\to T$. Suppose $h(a) \rhd h(b) = h(c)$.  Then, for any $b' \in [b]_{\ker(h)}$, $h(a \rhd b') = h(a) \rhd h(b') = h(c)$, so $a \rhd b' \in [c]_{\ker(h)}$.  In other words, $L_a: [b] \rightarrow [c]$.  Since $L_a$ is injective, $|[b]| \leq |[c]|$.  On the other hand, $\Img(h)$ is a connected subquandle of $T$, so this relationship must hold for any pair of equivalence classes. Hence, we conclude 
that all the equivalence classes of $\ker(h)$ have the same size, so $|S|=|\Img(h)||[a]_{ker(h)}|$. Thus, if $|S|$ and $|T|$ are relatively prime,  $|\Img(h)|=1$.
\end{proof}

This corollary explains the preponderance of 3's in the column for $Q3_2$ and 4's in the column for $Q4_7$ in Table \ref{table} as both of these are medial connected quandles.  

The results of this section combined with Theorem \ref{maintheorem} mean that, in some sense, we have determined all of the quandles of the form $\Hom(S,T).$  Given an arbitrary $S$, we form its medial quotient $S/m_S$, realize it as an ia-mesh, and now determine the collections of group homomorphisms $k_i$ and constants $e_i$ as in Theorem \ref{maintheorem}.

\subsection{Homomorphism into 2-reductive quandles} \label{main}
The general observation at the end of the previous Section \ref{homsource} can be made quite concrete in the case of ``2-reductive'' quandles.  A quandle $Q$ is {\it 2-reductive} if $(x \rhd y) \rhd z = y \rhd z$ for all $x,y,z \in Q$.  Jedlicka {\it et al.} in \cite{JPSZ} provide a structure theorem for 2-reductive quandles which we take advantage of in this section to study the space of quandle homomorphisms into such targets.  (Note that \cite{JPSZ} uses a slightly different identity to define 2-reductive; the two definitions are equivalent in the presence of mediality, and the definition above implies mediality. In short, 2-reductive as defined here coincides with ``2-reductive medial" as used in \cite{JPSZ}.)

In particular, Theorems 3.14 and 6.9 of \cite{JPSZ} can be summarized as follows:

\begin{theorem} \label{DavidThm}
\cite{JPSZ} An ia-mesh $(A_i, \phi_{i,j}, c_i)_{i,j\in I}$ gives rise to a 2-reductive quandle if $\phi_{i,j}=0$ for all $i,j\in I$. Moreover, all 2-reductive quandles arise in this way.
\end{theorem}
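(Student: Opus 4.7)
The plan is to prove both directions by direct computation with the ia-mesh operation $a \rhd b = c_{i,j} + \phi_{i,j}(a) + (1 - \phi_{j,j})(b)$ for $a \in A_i$ and $b \in A_j$, invoking the structure theorem for medial quandles quoted just before Theorem \ref{maintheorem} in the converse direction, since every 2-reductive quandle is in particular medial.

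For the forward implication, I would assume $\phi_{i,j} = 0$ for all $i, j \in I$, so the operation collapses to $a \rhd b = c_{i,j} + b$. Then for any $x \in A_i$, $y \in A_j$, $z \in A_k$ one has $x \rhd y = c_{i,j} + y \in A_j$, hence $(x \rhd y) \rhd z = c_{j,k} + z = y \rhd z$, which is exactly the 2-reductive identity. (A brief sanity check: the first ia-mesh axiom that $1 - \phi_{i,i}$ is an automorphism is trivially satisfied here.)

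For the converse, let $Q$ be 2-reductive (hence medial), so $Q$ arises from some ia-mesh $(A_i, \phi_{i,j}, c_{i,j})_{i,j \in I}$. Expanding $(x \rhd y) \rhd z$ and $y \rhd z$ via the mesh formula for $x \in A_i$, $y \in A_j$, $z \in A_k$ and subtracting yields
\[
\phi_{j,k}(c_{i,j}) + \phi_{j,k}\phi_{i,j}(x) - \phi_{j,k}\phi_{j,j}(y) = 0
\]
for all $x, y$. Setting $x = y = 0$ forces $\phi_{j,k}(c_{i,j}) = 0$; varying $x$ and $y$ independently then produces $\phi_{j,k} \circ \phi_{i,j} = 0$ for every $i \in I$ together with $\phi_{j,k} \circ \phi_{j,j} = 0$. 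The crucial last step is to invoke the generating axiom of the ia-mesh, which says that the elements $c_{i,j}$ together with the images $\phi_{i,j}(A_i)$, as $i$ ranges over $I$ (including $i = j$), generate $A_j$. Since $\phi_{j,k}$ kills each such generator, $\phi_{j,k} = 0$ identically.

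The main obstacle is purely bookkeeping: one must notice that the three vanishing conditions falling out of the 2-reductive identity---on $c_{i,j}$, on the images of the off-diagonal $\phi_{i,j}$, and on the image of the diagonal $\phi_{j,j}$---cover exactly the generating set of $A_j$ prescribed by the ia-mesh axioms, so that no further use of the remaining compatibility axioms (on composition of the $\phi$'s or on the $c$'s) is required to close the argument.
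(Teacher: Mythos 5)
Your proof is correct. There is nothing in the paper to compare it against line by line: Theorem \ref{DavidThm} is stated as a citation, summarizing Theorems 3.14 and 6.9 of \cite{JPSZ}, and the paper gives no proof of its own, so what you supply is a self-contained verification from the mesh formula. The forward direction is exactly right: with all $\phi_{i,j}=0$ the operation collapses to $a\rhd b=c_{i,j}+b$, so $(x\rhd y)\rhd z=c_{j,k}+z=y\rhd z$, the quandle axioms themselves being guaranteed by the structure theorem. The converse is also sound: the difference $(x\rhd y)\rhd z-(y\rhd z)=\phi_{j,k}(c_{i,j})+\phi_{j,k}\phi_{i,j}(x)-\phi_{j,k}\phi_{j,j}(y)$ must vanish identically, and specializing $x$ and $y$ shows that $\phi_{j,k}$ annihilates every generator of $A_j$ listed in the final mesh axiom, hence $\phi_{j,k}=0$; your observation that the three vanishing conditions exactly exhaust that generating set is the key point and you got it. The one step you use without justification is that the identity $(x\rhd y)\rhd z=y\rhd z$ implies mediality, which you need in order to invoke the structure theorem at all; the paper asserts this in the paragraph introducing 2-reductivity, and it follows in one line from self-distributivity together with 2-reductivity itself, namely $y\rhd(z\rhd w)=(y\rhd z)\rhd(y\rhd w)=z\rhd(y\rhd w)$, but a complete write-up should record it. What your approach buys is independence from the specific results of \cite{JPSZ} being cited (you still rely on their structure theorem for medial quandles, which is unavoidable), and it conveniently sidesteps the definitional mismatch the paper notes between its notion of 2-reductive and the one used in \cite{JPSZ}, since you work entirely with the identity as defined here.
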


In light of this result, we will often drop the $\phi_{i,j}$ altogether from an ia-mesh when it is clear that we are concerned with a 2-reductive quandle.
A corollary of Theorem \ref{DavidThm} is that all components of 2-reductive quandles are trivial.  Therefore the only connected 2-reductive quandle is the one-element quandle $I$.  

Again, we can understand arbitrary source quandles by employing a procedure similar to that of Section \ref{homsource} tailored to the 2-reductive case.

\begin{definition}
Let $Q$ be a quandle and 
$$C=\setof{((a\rhd b)\rhd c, b\rhd c)}{a,b,c\in Q}$$ 
We will denote the congruence generated by $C$ by $\gamma_Q$.
\end{definition}
Note that $\gamma_Q$ is the smallest congruence such that the quotient is 2-reductive, and again $\gamma_Q \subseteq \mathrm{ker}(c_Q)$, hence $|c(Q)|=|c(Q/\gamma_Q)|$. So, we have the following:

\begin{theorem}\label{factoring_red}
Let $S$ be a quandle and $T$ a 2-reductive quandle. Then $S/\gamma_S$ is 2-reductive and $\mathrm{Hom}(S,T)\cong \mathrm{Hom}(S/\gamma_S,T)$.  
\end{theorem}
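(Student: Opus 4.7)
The plan is to adapt the argument from Theorem \ref{factoring} (and its quandle-level refinement Theorem \ref{medialquotient}) to the specific identity defining 2-reductivity. In fact, $\gamma_S$ is essentially $\mathrm{Cg}(K)$ for $K$ the single identity $(x\rhd y)\rhd z = y\rhd z$, so at the set level the isomorphism $\mathrm{Hom}(S,T)\cong \mathrm{Hom}(S/\gamma_S,T)$ is an immediate corollary of Theorem \ref{factoring}. The only additional work is verifying the 2-reductivity of the quotient and upgrading the set-bijection to a quandle isomorphism.

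First I would verify that $S/\gamma_S$ is 2-reductive. By the very definition of $\gamma_S$, the generating pairs $((a\rhd b)\rhd c,\, b\rhd c)$ all become collapsed in the quotient, so every element of $S/\gamma_S$ satisfies $([a]\rhd [b])\rhd [c] = [b]\rhd [c]$. Since $\gamma_S$ is a congruence, the quotient operation is well-defined, and the identity holds throughout $S/\gamma_S$.

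Next I would show $\gamma_S \subseteq \ker(h)$ for every $h\in\mathrm{Hom}(S,T)$. Since $T$ is 2-reductive, for any $a,b,c\in S$ we have
\[
h((a\rhd b)\rhd c) = (h(a)\rhd h(b))\rhd h(c) = h(b)\rhd h(c) = h(b\rhd c),
\]
so every generating pair of $\gamma_S$ lies in $\ker(h)$; because $\ker(h)$ is a congruence containing the generators, it contains $\gamma_S$. The First Homomorphism Theorem then yields a unique $\widetilde h\in \mathrm{Hom}(S/\gamma_S,T)$ with $h = \widetilde h\circ \pi_{\gamma_S}$, and the assignment $h\mapsto \widetilde h$ is a bijection with inverse $f\mapsto f\circ \pi_{\gamma_S}$, exactly as in Theorem \ref{factoring}.

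Finally, I would check that this bijection is a quandle homomorphism (and hence an isomorphism). Note that $T$ being 2-reductive implies $T$ is medial, so both $\mathrm{Hom}(S,T)$ and $\mathrm{Hom}(S/\gamma_S,T)$ carry the pointwise medial-quandle structure from Theorem~3 of \cite{CN}. For $h,k\in\mathrm{Hom}(S,T)$ and $a\in S$,
\[
\widetilde{h\rhd k}([a]_{\gamma_S}) = (h\rhd k)(a) = h(a)\rhd k(a) = \widetilde h([a]_{\gamma_S})\rhd \widetilde k([a]_{\gamma_S}) = (\widetilde h \rhd \widetilde k)([a]_{\gamma_S}),
\]
so $\widetilde{h\rhd k} = \widetilde h\rhd\widetilde k$. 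The main thing to be careful about is simply that $\gamma_S\subseteq \ker(c_S)$ (already observed right before the theorem statement) so the quotient map behaves well and the generating pairs really do sit inside a single component, but no genuine obstacle arises—the entire argument is a clean specialization of the earlier factoring results to the 2-reductive identity.
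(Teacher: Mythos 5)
Your proposal is correct and follows essentially the same route as the paper, which states this theorem without a separate proof precisely because it is the 2-reductive analogue of Theorems \ref{factoring} and \ref{medialquotient}: factor every $h$ through $\pi_{\gamma_S}$ since $\gamma_S\subseteq\ker(h)$, note the quotient satisfies the defining identity, and observe the bijection respects the pointwise operation. Your write-up simply makes explicit the details the paper leaves implicit, including the (correct) remark that 2-reductivity implies mediality so that both Hom-sets carry quandle structures.
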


\begin{corollary}
Let $S$ be a connected quandle and $T$ be a 2-reductive quandle. Then every homomorphism between $S$ and $T$ is a constant mapping and $\mathrm{Hom}(S,T) \cong T$.
\end{corollary}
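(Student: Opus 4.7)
The plan is to combine Theorem \ref{factoring_red} with the structural fact (from Theorem \ref{DavidThm} and the remark following it) that the only connected 2-reductive quandle is $I$, the one-element quandle. Concretely, I would first apply Theorem \ref{factoring_red} to obtain $\mathrm{Hom}(S,T) \cong \mathrm{Hom}(S/\gamma_S, T)$ as quandles, where $S/\gamma_S$ is now 2-reductive.

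Next I would use the observation made just before Theorem \ref{factoring_red} that $\gamma_S \subseteq \ker(c_S)$, so by Proposition \ref{number_of_comp} we have $|c(S/\gamma_S)| = |c(S)| = 1$. Thus $S/\gamma_S$ is a connected 2-reductive quandle. Since the only such quandle is $I$, we obtain $S/\gamma_S \cong I$, and therefore
\begin{equation*}
\mathrm{Hom}(S,T) \cong \mathrm{Hom}(I, T) \cong T,
\end{equation*}
where the last identification just records the fact that a homomorphism out of the one-element quandle is determined by the image of its unique element, and this assignment is a quandle isomorphism (the pointwise operation on $\mathrm{Hom}(I,T)$ is literally the operation of $T$).

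Finally, to verify the ``constant mapping'' assertion, I would unwind the bijection provided by Theorem \ref{factoring_red}: every $h \in \mathrm{Hom}(S,T)$ is of the form $h = \widetilde{h} \circ \pi_{\gamma_S}$ with $\widetilde{h} \in \mathrm{Hom}(S/\gamma_S, T) = \mathrm{Hom}(I,T)$. Since $\pi_{\gamma_S}: S \to S/\gamma_S \cong I$ factors through a one-element set, the composition $\widetilde{h} \circ \pi_{\gamma_S}$ is constant on all of $S$. I do not anticipate any real obstacle here, as the entire argument is a short chain of applications of results already in the excerpt; the only small subtlety is explicitly confirming that the isomorphism $\mathrm{Hom}(I,T) \to T$ (evaluation at the unique element) is indeed a quandle isomorphism under the pointwise operation, which is immediate from the definition.
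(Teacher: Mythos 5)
Your proposal is correct and follows essentially the same route as the paper: both arguments reduce to the observation that $S/\gamma_S$ is a connected 2-reductive quandle, hence the one-element quandle $I$, and then identify $\mathrm{Hom}(S,T)$ with $\mathrm{Hom}(I,T)\cong T$. Your write-up is in fact slightly more explicit than the paper's, since it spells out why connectivity passes to the quotient (via $\gamma_S\subseteq\ker(c_S)$ and Proposition \ref{number_of_comp}) and why the factorization through $I$ forces every homomorphism to be constant.
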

\begin{proof}
We have that $\gamma_S$ is the full relation since $S/\gamma_S$ is a connected 2-reductive quandle. Hence it is trivial and connected, so $|S/\gamma_S|=1$. Thus, $\mathrm{Hom}(S,T)=\mathrm{Hom}(I,T) \cong T$.
\end{proof}

This corollary tells us that 2-reductive quandles will not provide additional information about a knot; they can however be useful for coloring links.  See for example \cite{QTri}.

\begin{lemma}\label{image of base points uniquely determines}
Let $S$ be a quandle, $T$ a 2-reductive quandle, and $h:S \to T$ a homomorphism. Then $h$ is completely determined by the image of a set of base points.  \end{lemma}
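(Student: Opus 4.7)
The plan is to show that, given any $b \in S$, the value $h(b)$ can be reconstructed from the values of $h$ on a chosen set of base points $B$. The crucial auxiliary fact I would establish first is that, in any 2-reductive quandle $T$, the left translation $L_x(y) = x \rhd y$ depends only on the component $c_T(x)$. Indeed, if $x'$ lies in the same component as $x$, then $x' = y_n \rhd (\cdots (y_1 \rhd x))$ for suitable $y_i \in T$; a single application of the 2-reductive identity $(u\rhd v)\rhd w = v \rhd w$ gives $(y_1 \rhd x)\rhd y = x \rhd y$, and an easy induction on $n$ yields $x' \rhd y = x \rhd y$ for every $y$.

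With this in hand, fix a set of base points $B \subseteq S$ and let $b \in S$ be arbitrary with base point $c \in B$ in its component. By definition of components, there exist $a_1,\ldots,a_n \in S$ such that $b = a_n \rhd (\cdots(a_1 \rhd c))$. Applying $h$ gives
\[
h(b) = L_{h(a_n)} \circ \cdots \circ L_{h(a_1)}\bigl(h(c)\bigr).
\]
For each $k$, let $b_k \in B$ denote the base point of the component of $a_k$. By Lemma~\ref{component to component}, $h(a_k)$ and $h(b_k)$ lie in the same component of $T$, so by the auxiliary fact $L_{h(a_k)} = L_{h(b_k)}$. Substituting,
\[
h(b) = L_{h(b_n)} \circ \cdots \circ L_{h(b_1)}\bigl(h(c)\bigr),
\]
which depends only on the restriction $h|_B$. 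Hence if two homomorphisms agree on $B$ they agree on every $b \in S$, which is exactly the statement of the lemma.

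The argument presents no real obstacle: the only insight needed is the observation that 2-reductivity collapses left multiplications to invariants of the component of the left argument. Once this is noticed, the rest is a direct application of Lemma~\ref{component to component} together with the fact that every element of $S$ is reachable from a base point by a chain of left multiplications.
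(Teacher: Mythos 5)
Your proof is correct and follows essentially the same route as the paper: both express an arbitrary element as a chain of left multiplications applied to a base point of its component, apply $h$, and use the fact that in a 2-reductive quandle left translation depends only on the component of the left argument. The only cosmetic difference is that you derive this invariance directly from the identity $(x\rhd y)\rhd z = y\rhd z$, whereas the paper reads it off from the affine-mesh formula $a\rhd b = t_{i,j}+b$ to obtain an explicit sum.
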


\begin{proof}
Let $\{b_i : i \in c(S)\}$ be a set of base points of $S$. Let $h(b_i)=e_i \in \hat{h}(i)$. The images of the base points determine the mapping $\hat{h}$, by virtue of Lemma \ref{component to component}. Then the image of any $a$ in component $i \in c(S)$ is given by:

\begin{eqnarray*}
h(a) & =& h(a_{i_1}\rhd(a_{i_2}\rhd(\ldots (a_{i_n}\rhd b_i))))\\
&=& h(a_{i_1})\rhd(h(a_{i_2})\rhd(\ldots (h(a_{i_n})\rhd h(b_i))))\\
&=& e_i+\sum_{\ell=1}^n t_{\hat{h}(i_\ell),\hat{h}(i)}
\end{eqnarray*}
Thus, $h$ is completely determined by the image of the base points.
\end{proof}

In this setting, Theorem \ref{maintheorem} has a powerful corollary for the case of 2-reductive quandles.

\begin{corollary}\label{criterion2}
Let $S$ be a quandle with base points $\setof{b_i}{i \in c(S)}$, $S/\gamma_S=\{S_i,s_{i,j}\}_{i,j\in c(S)}$ 
and $T = \{T_i, t_{i,j} \}_{i,j\in c(T)}$ be a 2-reductive quandle. Let $g : c(S)\to c(T)$ and $\setof{e_i \in g(i)}{i \in c(S)}\subseteq T$. Then the following are equivalent: 
\begin{itemize}
\item[(i)] there exists a quandle homomorphism $h:S\to T$ such that $\hat{h}=g$ and $h(b_i)=e_i$,
\item[(ii)] for each $i \in c(S)$, the map $k_i:S_i\to T_{g(i)}$ induced by $s_{j,i}\mapsto t_{g(j),g(i)}$
is a group homomorphism,
\item[(iii)] for any selection of elements $f_i \in g(i)$, there exists a unique homomorphism $h:S\to T$ such that $h(b_i) = f_i$.

\end{itemize}
\end{corollary}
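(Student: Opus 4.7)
The plan is to specialize Theorem \ref{maintheorem} to the 2-reductive setting, using Theorem \ref{factoring_red} to reduce to the case where $S$ itself is 2-reductive. By Theorem \ref{factoring_red}, $\Hom(S,T) \cong \Hom(S/\gamma_S,T)$, and by Proposition \ref{number_of_comp} the components, hence our chosen base points, are in bijection. So I would replace $S$ by $S/\gamma_S$ and assume $S$ is 2-reductive throughout. By Theorem \ref{DavidThm}, both $S$ and $T$ then arise from ia-meshes in which every $\sigma_{i,j}$ and $\tau_{i,j}$ is the zero map. In particular, each component $S_j$ is an abelian group generated by $\{s_{i,j} : i \in c(S)\}$, and I would choose the identification so that each base point $b_i$ corresponds to the zero element $0_i \in S_i$.

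The key observation is that in this situation condition $(i)$ of Theorem \ref{maintheorem} becomes $0 = 0$ and condition $(ii)$ becomes $k_j(s_{i,j}) = t_{g(i),g(j)}$, entirely independent of the choice of $e_i$. So Theorem \ref{maintheorem} specializes to: a homomorphism $h$ with $\hat{h} = g$ and $h(b_i) = e_i$ exists if and only if the assignment $s_{i,j} \mapsto t_{g(i),g(j)}$ extends to group homomorphisms $k_j : S_j \to T_{g(j)}$, and this latter condition is exactly (ii).

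With this translation the three implications follow quickly. For (i) $\Rightarrow$ (ii), the existence of $h$ yields the $k_j$'s directly from Theorem \ref{maintheorem}. For (ii) $\Rightarrow$ (iii), given that the $k_j$'s are well-defined group homomorphisms, any choice of $f_i \in g(i)$ satisfies the hypotheses of Theorem \ref{maintheorem} (used as $e_i$), producing a homomorphism $h$ with $h(b_i) = k_i(0_i) + f_i = f_i$; uniqueness is exactly Lemma \ref{image of base points uniquely determines}. Finally (iii) $\Rightarrow$ (i) is immediate by setting $f_i = e_i$.

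The main obstacle I anticipate is bookkeeping: carefully aligning the base points $b_i \in S$ with the zero elements $0_i$ of the abelian groups $S_i$ in the ia-mesh representation of $S/\gamma_S$ (which requires noting that any element of an $S_i$ may serve as $0_i$ since components of a 2-reductive quandle are trivial subquandles), and checking explicitly that conditions $(i)$ and $(ii)$ of Theorem \ref{maintheorem} genuinely decouple from the $e_i$'s. Once this decoupling is verified, the existence criterion lives entirely in condition (ii) of the corollary, and the $e_i$'s merely parametrize the fiber of the component map $\Hom(S,T) \to \Hom(c(S),c(T))$ over $g$, so all three conditions become essentially rereadings of a single specialization of the main theorem.
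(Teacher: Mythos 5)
Your proposal is correct and follows essentially the same route as the paper: specialize Theorem \ref{maintheorem} by observing that with all $\sigma_{i,j}$ and $\tau_{i,j}$ zero, condition (i) of that theorem is vacuous and condition (ii) reduces to $k_j(s_{i,j}) = t_{g(i),g(j)}$ independently of the $e_i$, with uniqueness supplied by Lemma \ref{image of base points uniquely determines}. Your extra care in aligning the base points $b_i$ with the zero elements $0_i$ of the mesh is a small bookkeeping point the paper leaves implicit, but it does not change the argument.
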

\begin{proof}

(i) $\Rightarrow$ (ii) Follows from Theorem \ref{maintheorem} since condition (i) of that result is vacuous in the 2-reductive case and condition (ii) is precisely the definition of the maps $k_i$ above, since the $s_{j,i}$ generate the $S_i$.

(ii) $\Rightarrow$ (iii) As in the previous implication, the $k_i$ give us exactly the collection of group homomorphisms required by Theorem \ref{maintheorem}.  Now, note that with all the $\sigma_{i,j}$ and $\tau_{i,j}$ equal to zero, the constants $e_i$ play no role in the condition.  Hence, the quandle homomorphism $h: S \rightarrow T$ must exist for any choice of $e_i$.  Uniqueness follows from Lemma \ref{image of base points uniquely determines}.

(iii) $\Rightarrow$ (i) Clear.
\end{proof}

This corollary exposes an interesting dichotomy; for a given map of components, there are either no homomorphisms that correspond to that map, or we can map basepoints arbitrarily.  

As a consequence of Corollary \ref{criterion2} we can compute the size of Hom quandles with 2-reductive targets.

\begin{corollary}\label{arbitrary image of base points}
Let $S$ be a quandle and $T$ be a 2-reductive quandle. Then:

$$|\Hom(S, T)|=\sum_{g:c(S)\to c(T)} \delta_g \prod_{i\in c(S)} |g(i)| \quad \textrm{where} \quad \delta_g=\left\{
    \begin{array}{ll}
    	1, \quad \text{ if Thm. \ref{criterion2} (ii) holds for g}\\
        0, \quad \text{otherwise.}
   \end{array}
    \right. $$

\end{corollary}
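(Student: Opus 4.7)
The plan is to partition $\Hom(S,T)$ according to the induced map on components and then invoke Corollary \ref{criterion2} to count the size of each fiber.

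By Lemma \ref{component to component}, every homomorphism $h \in \Hom(S,T)$ determines a well-defined function $\hat h : c(S) \to c(T)$. Hence we obtain a disjoint-union decomposition
\begin{equation*}
\Hom(S,T) \;=\; \bigsqcup_{g:c(S)\to c(T)} \{h \in \Hom(S,T) : \hat h = g\},
\end{equation*}
so $|\Hom(S,T)| = \sum_g |\{h : \hat h = g\}|$. It therefore suffices to count, for each fixed $g$, the number of homomorphisms inducing $g$.

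Fix $g: c(S) \to c(T)$ and consider two cases. If condition (ii) of Corollary \ref{criterion2} fails for $g$, then by the implication (i) $\Rightarrow$ (ii) of that corollary, no homomorphism $h$ with $\hat h = g$ exists, contributing $0 = \delta_g \prod_i |g(i)|$ to the sum. If instead condition (ii) holds for $g$, then by (ii) $\Rightarrow$ (iii), for every selection of basepoint images $(f_i)_{i\in c(S)}$ with $f_i \in g(i)$, there exists a unique $h\in\Hom(S,T)$ with $\hat h = g$ and $h(b_i)=f_i$. By Lemma \ref{image of base points uniquely determines}, distinct tuples $(f_i)$ produce distinct homomorphisms. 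The number of such tuples is exactly $\prod_{i\in c(S)} |g(i)|$, which matches $\delta_g \prod_i |g(i)|$ since $\delta_g = 1$ here.

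Summing over all $g: c(S) \to c(T)$ produces the claimed formula. There is no real obstacle; the corollary is essentially a bookkeeping consequence of Corollary \ref{criterion2}, with the only points to verify being the disjointness of the decomposition by $\hat h$ (which is immediate) and the observation that the uniqueness in (iii) together with Lemma \ref{image of base points uniquely determines} ensure a bijection between admissible basepoint-image tuples and homomorphisms in the given fiber.
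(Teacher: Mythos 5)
Your proof is correct and follows exactly the route the paper intends: the paper states this corollary without proof as an immediate consequence of Corollary \ref{criterion2}, and your argument---partitioning $\Hom(S,T)$ by the induced map $\hat h$ on components and using the (i) $\Leftrightarrow$ (ii) $\Leftrightarrow$ (iii) dichotomy to count each fiber as either $0$ or $\prod_{i}|g(i)|$---is precisely the intended bookkeeping.
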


Note that if all components of $T$ have the same size $n$, then the size of the Hom set from $S$ into $T$ will always be a multiple of $n^{|c(S)|}$.  The $Q4_6$ and $Q6_{67}$ columns in Table \ref{table} illustrate this phenomenon.  The quandle $Q4_6$ has two components of order two, while $Q6_{67}$ has two components of order three, each isomorphic to $3I$. 
For an example of when the components of $T$ are not the same size, consider $Q5_{16}$, which has components of sizes two and three.  Consider the source $Q4_6$.  There are four possible maps $g$, but if the two components of $Q4_6$ are mapped to different components of $Q6_{67}$, then one of the $k_i$ in Corollary \ref{criterion2} (ii) would have to be a homomorphism from $\mathbb{Z}_2$ to $\mathbb{Z}_3$.  Hence, only the $g$ which map both components of the source to a single component in the target contribute to the sum, and we have $2 \times 2 \,+\, 3 \times 3 = 13$ homomorphisms as indicated in Table \ref{table}.

The conditions on $T$ in Theorem \ref{criterion2} may seem quite specialized.  However, Tables 1 and 2 of \cite{JPSZ} show that the vast bulk of quandles are, in fact, 2-reductive, perhaps asymptotically all of them.

\begin{corollary}\label{structure of hom}
Let $S$ be a quandle and $T$ be a 2-reductive medial quandle. Then $\Hom(S,T)$ embeds in $T^{c(S)}$ and $\Hom(S,T)= \bigcup_{i} X_{i}$ where each $X_{i}$ is a component of $T^{c(S)}$.  
\end{corollary}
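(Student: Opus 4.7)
The plan is to build an explicit quandle embedding $\Phi : \Hom(S,T)\hookrightarrow T^{c(S)}$ and to show that its image is a union of entire components of $T^{c(S)}$. First I would fix a set of base points $\{b_i : i\in c(S)\}$ of $S$ and define $\Phi(h) = (h(b_i))_{i\in c(S)}$. Injectivity is immediate from Lemma \ref{image of base points uniquely determines}. The map $\Phi$ is a quandle homomorphism because the operation on $\Hom(S,T)$ is pointwise: the $i$-th coordinate of $\Phi(h\rhd k)$ is $h(b_i)\rhd k(b_i)$, which is precisely the $i$-th coordinate of $\Phi(h)\rhd\Phi(k)$ in $T^{c(S)}$.

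Next, I would identify the components of $T^{c(S)}$ explicitly. By Theorem \ref{DavidThm}, we may write $T = \bigsqcup_{j\in c(T)} T_j$ with $a\rhd b = t_{i,j} + b$ whenever $a\in T_i$ and $b\in T_j$, so the components of $T$ are the groups $T_j$ themselves. I claim that the components of $T^{c(S)}$ are the sets $P_g := \prod_{i\in c(S)} T_{g(i)}$ indexed by functions $g: c(S)\to c(T)$. Each $P_g$ is closed under $\rhd$ (the result lies in the component of the second argument), so it is a subquandle; and $P_g$ forms a single $\Inn(T^{c(S)})$-orbit because $L_{(x_i)}$ adds $t_{j_i, g(i)}$ to the $i$-th coordinate of any element of $P_g$, and by varying $j_i$ independently across coordinates (using the ia-mesh generation axiom in each factor and the identity $t_{j,j}=0$ to hold unwanted coordinates fixed) one can reach any prescribed tuple in $P_g$.

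Finally, I would invoke Corollary \ref{criterion2} to identify the image of $\Phi$ with $\bigsqcup_{g\text{ valid}} P_g$: for each $g: c(S)\to c(T)$, either no $h\in\Hom(S,T)$ has $\hat h = g$, or else every choice of base-point images $f_i\in T_{g(i)}$ extends to a unique $h\in\Hom(S,T)$. This exhibits $\Hom(S,T)$, via $\Phi$, as a disjoint union of components of $T^{c(S)}$, which is exactly the desired statement.

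The main obstacle is the orbit argument for $T^{c(S)}$ in the second step. Because $L_{(x_i)}$ is determined by a single element $(x_i)\in T^{c(S)}$, one must argue that the contributions $t_{j_i, g(i)}$ to the various coordinates can be chosen independently; I expect this to require explicit use of the identity $t_{j,j}=0$ from the ia-mesh axioms, applied coordinate by coordinate to pad the number of applications to a uniform length. Everything else is routine bookkeeping, given the clean characterization provided by Corollary \ref{criterion2}.
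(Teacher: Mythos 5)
Your proposal is correct and follows essentially the same route as the paper: the same base-point evaluation embedding $h \mapsto (h(b_i))_{i\in c(S)}$, with injectivity and surjectivity onto whole components both obtained from Corollary \ref{criterion2}(iii). The only difference is that you supply the orbit argument showing each product $\prod_{i\in c(S)} T_{g(i)}$ really is a single component of $T^{c(S)}$ --- a fact the paper simply asserts --- and your use of $t_{j,j}=0$ together with the ia-mesh generation axiom to vary the coordinates independently is exactly the right justification for that step.
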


\begin{proof}
Let $\{b_i : i \in c(S)\}$ be a set of base points of $S$. The embedding $k: \Hom(S,T) \rightarrow T^{c(S)}$ defined by $h \mapsto (h(b_1),\ldots,h(b_n))$ is injective by Corollary \ref{criterion2} (iii) and is a homomorphism due to the component-wise definition of the quandle structure on $\Hom(S,T)$.  The components of $T^{c(S)}$ are given by $\displaystyle{C = \prod_{i\in c(S)} T_{j_i}}$ where $T_{j_i}$ are components of $T$. Let $k(h) \in C$, i.e., $h(b_i) =e_i \in T_{j_i}$ for every $i \in c(S)$.   Then by (iii) of Corollary \ref{criterion2} with $g(i) = T_{j_i}$, we have $C \subset \Img(k)$.
\end{proof}

Note that the embedding given in Corollary \ref{structure of hom} refines the embedding in Theorem 8 of \cite{CN}, since the minimum cardinality of a set of generators is at least $|c(S)|$ (since we need at least one element from each component to generate the quandle). Note that if $S$ is already 2-reductive, then it is generated by any set of basepoints, so the minimum number of generators is equal to the number of components.  Moreover, the isomorphic copy of $\Hom(S,T)$ in $T^{c(S)}$ is a union of components, not merely an arbitrary subquandle.

\begin{corollary}\label{TRIV}

Let $S$ be a quandle and $T$ be a 2-reductive quandle. Let $$G =\{ g: c(S) \rightarrow c(T) : t_{g(i), g(j)} = 0 \, \textrm{ for all } i, j \in c(S)\}$$  Then $$\Triv(S,T) \cong \bigcup_{g \in G}  \prod_{i \in c(S)} T_{g(i)}$$ In particular $\bigcup_{i \in c(T)} T_i^{c(S)} \leq \Triv(S,T)$.
\end{corollary}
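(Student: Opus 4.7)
The plan is to combine the explicit 2-reductive operation formula with Corollary \ref{criterion2}(iii) and the embedding $\Hom(S,T)\hookrightarrow T^{c(S)}$ of Corollary \ref{structure of hom} to obtain a component-by-component description of the trivial-image homomorphisms. Since $T$ is 2-reductive, Theorem \ref{DavidThm} reduces its operation to $x \rhd y = t_{c_T(x),c_T(y)} + y$, which will make the defining condition ``$\Img(h)$ is trivial'' straightforward to translate into a condition on $\hat h$.

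First I would identify which component maps $g = \hat h : c(S)\to c(T)$ can arise from elements $h\in\Triv(S,T)$. Applying the formula above to $h(a)\rhd h(b)$, the condition that $\Img(h)$ be trivial comes down to $t_{g(i),g(j)}=0$ for all $i,j\in c(S)$, which is exactly the defining property of $g\in G$. Next, for each such $g\in G$, I would count the $h$'s with $\hat h=g$. In condition (ii) of Corollary \ref{criterion2} the required group homomorphism $k_i:S_i\to T_{g(i)}$ is forced to send each generator $s_{j,i}$ to $t_{g(j),g(i)}=0$, so $k_i\equiv 0$ works and the condition is automatic; thus by part (iii) of that corollary, the homomorphisms with $\hat h = g$ are in natural bijection with free choices of basepoint images in $\prod_{i\in c(S)} T_{g(i)}$.

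To upgrade this set bijection to the stated quandle isomorphism, I would restrict the evaluation embedding $h\mapsto(h(b_i))_{i\in c(S)}$ of Corollary \ref{structure of hom} to $\Triv(S,T)$; it is a quandle homomorphism because $\rhd$ is pointwise on both sides, and the preceding paragraph identifies its image with exactly $\bigcup_{g\in G}\prod_{i\in c(S)}T_{g(i)}$, which is a union of components of $T^{c(S)}$ and hence a subquandle. For the ``in particular'' clause, each constant map $g\equiv j$ lies in $G$ because the ia-mesh axiom $c_{i,i}=0$ forces $t_{j,j}=0$, so the corresponding copy of $T_j^{c(S)}$ sits inside $\Triv(S,T)$.

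The main subtlety is bookkeeping rather than a deep obstacle: one must check that $\bigcup_{g\in G}\prod_i T_{g(i)}$, viewed as a subset of $T^{c(S)}$, is closed under both $\rhd$ and left-divisibility, so that its pointwise structure really agrees with the restriction of the Hom quandle operation transported along the evaluation map. Both checks follow immediately from the 2-reductive formula, since it makes the component of $x\rhd y$ depend only on the component of $y$, and the unique $z$ solving $y\rhd z = x$ automatically has the same component map as $x$.
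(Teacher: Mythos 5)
Your proposal is correct and follows essentially the same route as the paper: use the 2-reductive formula $h(a)\rhd h(b)=t_{\hat h(i),\hat h(j)}+h(b)$ to show that trivial image forces $\hat h\in G$, and conversely observe that for $g\in G$ each $k_i$ of Corollary \ref{criterion2}(ii) is the zero map, so every choice of basepoint images in $\prod_{i\in c(S)}T_{g(i)}$ is realized. The paper's proof is terser and leaves the quandle-isomorphism bookkeeping via the evaluation embedding implicit, which you spell out; no substantive difference.
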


\begin{proof}
Let $h\in \Triv(S,T)$. Then $h(e_j) = h(e_i)\rhd h(e_j)=h(e_j)+t_{\hat{h}(i),\hat{h}(j)}$, so $t_{\hat{h}(i), \hat{h}(j)}=0$ for every $i,j\in c(Q)$. Thus, $\hat{h} \in G$. On the other hand, if $g \in G$ then each $k_i$ of Theorem \ref{criterion2} (ii) is the constant zero map, which is always a homomorphism.  Hence, $\prod_{i \in c(S)} T_{g(i)} \leq \Triv(S,T).$

\end{proof}

If all the components of the source are connected quandles, the homomorphisms are determined by trivial subquandles of the target.

\begin{corollary}\label{components are connected}
Let $S$ be a quandle and $T$ be a 2-reductive quandle. If all the components of $S$ are themselves connected, then 
$\Hom(S,T)=\Triv(S,T)$.
\end{corollary}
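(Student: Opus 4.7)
The plan is to show directly that every $h\in\Hom(S,T)$ is constant on each component of $S$, and then that the collection of these constant values forms a trivial subquandle of $T$. The reverse containment $\Triv(S,T)\subseteq\Hom(S,T)$ is immediate.

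Fix a set of base points $\{b_i : i\in c(S)\}$ and set $e_i := h(b_i)$. Consider an arbitrary $a\in S_i$. The hypothesis that the component $S_i$ is itself a connected quandle means that $\Inn(S_i)$ acts transitively on $S_i$, so I can represent $a$ by a word entirely within $S_i$, namely $a = a_1\rhd(a_2\rhd\cdots(a_n\rhd b_i))$ with every $a_\ell\in S_i$. Applying the computation from the proof of Lemma \ref{image of base points uniquely determines} (which uses that $T$ is 2-reductive, so $\phi_{i,j}=0$ in its ia-mesh), I would obtain
\[ h(a) \;=\; e_i \;+\; \sum_{\ell=1}^n t_{\hat h(c_S(a_\ell)),\,\hat h(i)}. \]
Because each $a_\ell\in S_i$, every $c_S(a_\ell)=i$, so each summand is $t_{\hat h(i),\hat h(i)}$, which vanishes by the second ia-mesh axiom ($c_{j,j}=0$). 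Hence $h(a)=e_i$, so $h$ is constant on $S_i$ and $\Img(h)\subseteq\{e_i:i\in c(S)\}$.

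To finish, I would verify that this image is a trivial subquandle of $T$. For any $i,j\in c(S)$, the element $b_i\rhd b_j$ lies in $S_j$, so the previous step gives $h(b_i\rhd b_j)=e_j$; but the homomorphism property also yields $h(b_i\rhd b_j)=e_i\rhd e_j$. Therefore $e_i\rhd e_j=e_j$ for every pair $i,j$, so $\Img(h)$ is a trivial subquandle of $T$, which means $h\in\Triv(S,T)$.

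I do not anticipate a serious obstacle: the substance of the argument is the formula of Lemma \ref{image of base points uniquely determines} combined with the vanishing of the diagonal mesh constants, which together force the word inside $S_i$ to collapse to $e_i$. The one point that deserves a sentence of justification is that the assumption ``the components of $S$ are themselves connected'' really does let one choose the generating word inside $S_i$; this is exactly the definition of connectedness once one observes that $S_i$ is a subquandle of $S$ (it is closed under both $\rhd$ and left division, since $L_b$ and $L_b^{-1}$ lie in $\Inn(S)$ and therefore preserve components).
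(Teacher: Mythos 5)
Your proof is correct, but it takes a genuinely different route from the paper's. The paper argues structurally: by Theorem \ref{factoring_red} every $h$ factors through $S/\gamma_S$; the components of $S/\gamma_S$ are connected (as homomorphic images of the connected components of $S$, via Proposition \ref{number_of_comp}) and trivial (since $S/\gamma_S$ is 2-reductive), hence singletons; so $S/\gamma_S = S/\ker(c_S)$ is the trivial quandle $c(S)$ and every image is a trivial subquandle of $T$. You instead compute directly inside the ia-mesh of $T$: connectedness of each $S_i$ (correctly justified as a subquandle closed under $L_b^{\pm 1}$) lets you choose the generating word for $a\in S_i$ with all letters in $S_i$, so every correction term in the formula of Lemma \ref{image of base points uniquely determines} is $t_{\hat h(i),\hat h(i)}=0$ and $h$ collapses to the constant $e_i$ on $S_i$; the identity $e_i\rhd e_j=e_j$ then follows from $b_i\rhd b_j\in S_j$. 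The one cosmetic caveat is that, for infinite quandles, the generating word may require the inverses $L_{a_\ell}^{-1}$ as well; this is harmless here (the inverse subtracts the same vanishing constant) and matches the convention the paper itself uses in Lemma \ref{component to component}. What your approach buys is an explicit, self-contained verification that does not invoke Theorem \ref{factoring_red}; what the paper's buys is brevity and the slightly stronger structural fact that $\gamma_S=\ker(c_S)$, i.e., the 2-reductive quotient of such an $S$ is already trivial.
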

\begin{proof}
Since the components of $S/\gamma_S$ are connected and trivial, then $S/\ker(c(S))=S/\gamma_S$. So $S/\gamma_S$ is trivial and therefore, for any $h \in \Hom(S,T)$, $\Img(h)$ is a trivial subquandle of $S$. 

\end{proof}
We include $Q6_{52}$ in Table \ref{table} to illustrate this corollary.  It has two components, each of which is isomorphic to the connected $Q3_2$.  The homomorphisms from $Q6_{52}$ into a 2-reductive quandle, such as $Q4_6$, are therefore simply the trivial ones.  We can count them using Lemma \ref{hom with projec image}:  one each for the four single-element subquandles of $Q4_6$, and two each for the two trivial two-element subquandles.

\begin{example}
This example illustrates how $\Hom(S,T)$ depends in a significant way on the constants in the ia-mesh for $S$. Let $T=\left\{ \mathbb{Z}_n,\mathbb{Z}_m, t =\begin{bmatrix}
0 & 1_m\\
1_n & 0\\
\end{bmatrix}\right\}$ and let $S=\{S_i, s_{i,j}\}_{i,j\in I}$ be a medial 2-reductive quandle. By Corollary \ref{TRIV}, we have $\Triv(S,T)=\mathbb{Z}_n^{c(S)}\cup \mathbb{Z}_m^{c(S)}$. Let $h\in \Hom(S,T)$.  If $s_{i,j}=0$, then $k_j (s_{i,j})=t_{\hat{h}(i),\hat{h}(j)}=0$, and therefore $\hat{h}(i)=\hat{h}(j)$. In particular if any component $S_k$ of $S$ acts trivially, i.e., $s_{k,j}=0$ for every $j\in S$, then $\hat{h}(i)=\hat{h}(k)$ for every $i\in I$, which means $\Hom(S,T)=\Triv(S,T)$. 
\end{example}

Taken together, these results explain all of the entries in Table \ref{table} except for the column of $Q7_{71}$, the only medial quandle in the table that is not 2-reductive.  However, since none of the quandles in Table \ref{table} have more than two non-singleton components, it is possible to see using Theorem \ref{maintheorem} that for all of the sources into $Q7_{71}$ the images of the homomorphisms lie in a single component of $Q7_{71}$.  Hence, in fact, we can count them using Corollary \ref{criterion2}.  Presumably this will not be the case for larger sources and/or targets that are not 2-reductive.  

Corollary \ref{structure of hom} gives us the precise structure of $\Hom(S,T)$ when $T$ is 2-reductive as a union of products of components of $T$.  An interesting direction for further investigation would be to understand the behavior of the $(k_i, e_i)$ of Theorem \ref{maintheorem} sufficiently well to provide a similarly precise characterization of  $\Hom(S,T)$ when $T$ is not necessarily 2-reductive.  

\section{Acknowledgements}
Some of the initial ideas that led to this paper arose in a conversation between the authors and David Stanovsk\'{y}. We thank him for this insightful and helpful discussion, particularly the 
guidance to focus attention on 2-reductive quandles.

\end{document}